\newtheorem{thm}{Theorem}[section]
\newtheorem{lem}{Lemma}[section]
\theoremstyle{definition}
\theoremstyle{remark}
\newtheorem{rem}{Remark}[section]
\numberwithin{equation}{section}
\newcommand{\curl}{\mathbf{curl}}
\newcommand{\diver}{\mathrm{div}}
\newcommand{\pare}[1]{\left(#1\right)}
\theoremstyle{remark}
\def\bold{\boldsymbol}
 \def\p{\partial} \def\nb{\nonumber}
\def\to{\rightarrow}
\def\Om{\Omega}  \def\om{\omega}
\def\l{\label}  
\def\m{\mbox} \def\t{\wedge}  \def\lam{\lambda}
\def\bb{\begin{equation}} \def\ee{\end{equation}}
\def\beqn{\begin{eqnarray}}  \def\eqn{\end{eqnarray}}
\def\beqnx{\begin{eqnarray*}} \def\eqnx{\end{eqnarray*}}
\title[Decoupling Elastic Waves]{Revisiting the Decoupling of Elastic Waves From a Weak Formulation Perspective }
\author{Junjiang Lai}
\address{Department of Mathematics, Minjiang University, Fuzhou 350108, China }
\email{laijunjiang@163.com}
\author{Hongyu Liu}
\address{Department of Mathematics, Hong Kong Baptist University, Kowloon, Hong Kong, China}
\email{hongyu.liuip@gmail.com}
\author{Jingni Xiao}
\address{Department of Mathematics, Hong Kong Baptist University, Kowloon, Hong Kong, China}
\email{xiaojn@live.com}
\author{Yifeng Xu}
\address{Department of Mathematics and Scientific Computing Key Laboratory of Shanghai Universities, Shanghai Normal University, Shanghai 200234, China.}
\email{yfxu@shnu.edu.cn, yfxuma@aliyun.com}
\begin{document}

\begin{abstract}

Elastic scattering governed by the Lam\'e system associated with the third-type or fourth-type boundary condition is considered. It was shown in \cite{liuxiao} by two of the authors that under suitable geometric conditions on the boundary surface of the elastic inclusion, the longitudinal and shear waves can be decoupled. The decoupling result in \cite{liuxiao} was derived based on analyzing the local boundary behaviours of the elastic fields. In this article, we provide a different argument from a variational perspective in proving the decoupling result.

%In this article, we introduce two weak formulations for the Lam\'e system associated with the third and the fourth kind boundary conditions over a bounded domain. Our arguments are based on existing results for the boundary conditions in \cite{liuxiao}. The resulting variational problems are shown to be well-posed. Moreover, under a reasonable assumption on the regularity of the solution we derive a Helmholtz equation and a Maxwell system from variational formulations.

% The variational formulation problem under these weak forms are proven to be well-posed. Moreover, they form independent well-posed problems even when those  geometric conditions are not satisfied any more. Decoupling results for the Lam\'e system are also derived based on those weak forms.
\end{abstract}

\maketitle

\section{Introduction}

Let $\Om\subset\mathbb{R}^3$ be a bounded simply-connected domain with a connected {\color{black}piecewise} $C^{2,1}$-smooth boundary $\p\Om$. An isotropic elastic medium, characterized by the Lam\'e constants $\lambda> 0$ and $\mu>0$, occupies $\Om$. Given a source term $\bold{f}\in \bold{L}^2(\Om)$,
we consider the linearized elasticity equation
\begin{equation}\label{lame-sys}
-\Delta^\ast\bold{u}-\om^2\bold{u}=\bold{f}\quad\m{in}~\Om,
\end{equation}
where $\bold{u}=[u_j(\boldsymbol{x})]_{j=1}^3$ and $\om>0$ denote the displacement field and the angular wavenumber, respectively and the operator $\Delta^*$ is defined by
\begin{align}
\Delta^*\bold{u}=&\mu\Delta \bold{u}+(\lambda+\mu)\boldsymbol{\nabla}(\boldsymbol{\nabla}\cdot \bold{u})\nonumber\\
=&-\mu\boldsymbol{\nabla}\wedge(\boldsymbol{\nabla}\wedge \bold{u})+(\lambda+2\mu)\boldsymbol{\nabla}(\boldsymbol{\nabla}\cdot \bold{u}).\label{atwo}
\end{align}
%with the Lam\'{e} constants $\lambda$ and $\mu$ satisfying $\mu>0$ and $\lambda\geq 0$. %$\lambda+2\mu>0$.

The Lam\'e system \eqref{lame-sys} is complemented by one of the following four kinds of boundary conditions{\color{black} \cite{Knops1971,Kupradz1979}.} The first kind boundary condition is given by
\begin{equation}\l{1st_bc}
\bold{u}=\bold{0}\quad\m{on}~\p\Om.
\end{equation}
The second kind one reads
\bb\label{2nd-bc}
\bold{Tu}=\bold{0}\quad\m{on}~\p\Om.
\ee
where $\boldsymbol{T}$ is the traction operator on $\partial \Omega$ defined by
\begin{equation*}\label{eq:tractionoperator}
\begin{split}
\boldsymbol{T}{\boldsymbol{u}}
:=&\lambda (\bold{\nabla}\cdot \bold{u})\bold{\nu}
+\mu (\bold{\nabla}\bold{u}+\bold{\nabla}^T\bold{u})\bold{\nu}  \\=&2\mu\partial_{\boldsymbol{\nu}} {\bold{u}}+\lambda\,{\boldsymbol{\nu}}\boldsymbol{\nabla}\cdot {\bold{u}}+\mu\,{\boldsymbol{\nu}}\wedge(\boldsymbol{\nabla}\wedge {\bold{u}}),
\end{split}	
\end{equation*}
with the superscript $\mathrm{T}$ denoting the transpose of a matrix, ${\boldsymbol{\nu}}\in\mathbb{S}^2$ the outward unit normal vector to $\partial \Omega$, and $\partial _{\boldsymbol{\nu}}$ the boundary differential operator defined as
\[
\partial_{\boldsymbol{\nu}} {\bold{u}}:=[{\boldsymbol{\nu}}\cdot\boldsymbol{\nabla} {u}_{j}]_{j=1}^3.
\]
The boundary conditions of the third and fourth kind are given respectively, by
\bb\label{3rd-bc}
\bold{\nu}\cdot\bold{u}=0,\quad\bold{\nu}\t\bold{Tu}=\bold{0}\quad\mbox{on}~\p\Om,
\ee
and
\bb\label{4th-bc}
\bold{\nu}\t\bold{u}=\bold{0},\quad\bold{\nu}\cdot\bold{Tu}=0\quad\mbox{on}~\p\Om.
\ee

In this article, we are mainly concerned with the Lam\'e system \eqref{lame-sys} associated with the third \eqref{3rd-bc} or the fourth \eqref{4th-bc} boundary condition.
It is known that the elastic body waves can be decomposed into two parts: pressure wave and shear wave, which generally coexist and propagate simultaneously at different speeds and directions. Recently, in \cite{liuxiao} under {\color{black}certain geometric conditions} %some geometric conditions in terms of the mean and the Gaussian curvatures
on the boundary of a impenetrable scatterer, a complete decoupling of the pressure wave and shear wave is established for the third and the fourth boundary conditions, respectively. Further, this allows reformulation of the Lam\'{e} system as a Helmholtz system and a Maxwell system. %can be transformed   %different kinds. One is the compression or primary (P-) wave, and the other is the shear or secondary (S-) wave. They propagate in different speed and directions.
%It is known that the P- and the S- waves are generally coupled.
%However, it has been proven in \cite{liuxiao} that under the third or the fourth \eqref{4th-bc} boundary condition as well as additional geometric conditions, P- and S- waves can be completed decoupled and form into two independent systems, the Helmholtz and the Maxwell.

The aim of this article is to revisit the decoupling results in
\cite{liuxiao} by a variational approach. By the existing results
for boundary conditions of the third and the fourth kind in \cite{liuxiao}, we obtain two relevant variational formulations; see \eqref{vp_4th-bc} and \eqref{vp_3rd-bc}. By virtue of the Fredholm alternative theory, {\color{black} well-posednesses} %unique solvability
of these two variational problems are proved; see Theorems \ref{thm:4thweak} and \ref{thm:3rdweakequ}.
We note that \eqref{vp_4th-bc} and \eqref{vp_3rd-bc} are well-posed without the geometric assumptions on the boundary surface.
{\color{black}{\color{black}Furthermore, } under a reasonable assumption on the regularity of the solution to the Lam\'{e} system, the variational formulations are reformulated as a Helmholtz equation and a Maxwell system; see Theorems \ref{thm:4thdecoup} and \ref{thm:3rddecoup}.  {\color{black} In addition, if certain geometric conditions are imposed on the boundary of the domain,} %If the mean curvature vanishes on the boundary or $\Om$ is a Lipschitz polyhedron,
we are able to show that the variational formulations are equivalent to the original Lam\'{e} system.} That means, we retrieve the  decoupling results deduced from the Lam\'{e} system itself in \cite{liuxiao}, but from a weak formulation perspective.

To our knowledge, the two variational formulations \eqref{vp_4th-bc} and \eqref{vp_3rd-bc} are novel in the literature and are of significant use in numerical simulation of the Lam\'{e} system associated with \eqref{3rd-bc} and \eqref{4th-bc}, especially by the variational discretization techniques, e.g. finite element methods \cite{BrennerScott2008},\cite{Ciarlet1978},\cite{Jin2014},\cite{Monk2003}. Further, due to the decoupling results in Theorems \ref{thm:4thdecoup} and \ref{thm:3rddecoup}, real computations and the related numerical analysis may be performed separately on a Helmholtz equation and a Maxwell system.

%are new in the literature. , the Lam\'e system \eqref{lame-sys} is derived from the variational formulation.
%% in two directions.
%Firstly, instead of the Lam\'e system with a zero source term considered in \cite{liuxiao}, we introduce a source term $\bold{f}$.
%Secondly, based on the decoupling results, we are able to define a weak formulation for the Lam\'e system associated with the  third or the fourth boundary condition, under the same geometric conditions as required for the decoupling result.
%To our best knowledge, this is the first time that a weak formulation for the third or the fourth boundary condition is introduced.
%We will prove well-posedness of the new problems under the weak formulation.
%Moreover, we would like to emphasize here that these new problems are well-defined and well-posed regardless of the above mentioned geometric conditions.
%More precisely, under those geometric conditions, the weak formulations are well-posed and equivalent to the Lam\'e system associated with the third or the fourth boundary condition, while without the additional  geometric conditions, the new weak forms are still self-completed problems for the Lam\'e system but probably with different boundary conditions.
%The later case is definitely worth further study.

The rest of this paper is organized as follows. Some preliminaries required in later analysis is provided in section 2. In section 3, we present variational formulations for {\color{black}the Lam\'{e} system} \eqref{lame-sys} associated with {\color{black}the third \eqref{3rd-bc} or the fourth \eqref{4th-bc} boundary conditions,} from which  relevant decoupling results are deduced. Throughout the paper, %we adopt standard notation for $L^2$-based Sobolev spaces and use $(\cdot,\cdot)_{G}$ to denote the $L^{2}(G)$ scalar product on an open bounded domain $G\subset\mathbb{R}$ which is omitted when $G=\Omega$. Moreover,
we shall use $C$, with or without subscript, for a generic constant independent of the function under consideration, and it may take a different value at each occurrence.

\section{Preliminaries}

\subsection{Sobolev spaces}

We first introduce some Sobolev spaces, which are required in the subsequent analysis.
{\color{black} We adopt the standard notation for $L^2$-based Sobolev spaces and use $(\cdot,\cdot)_{G}$ to denote the $L^{2}(G)$ scalar product on an open bounded domain $G\subset\mathbb{R}$ which is omitted when $G=\Omega$.} %For notational simplicity, all the solution spaces are restricted to real spaces.
{\color{black}
	The spaces $\boldsymbol{H}(\curl;\Omega)$ and $\boldsymbol{H}(\diver;\Omega)$ are given, respectively, by  (cf. \cite{AmroucheBernardiDaugeEtAl1998})}
\[
    \boldsymbol{H}(\curl;\Omega)=\{\boldsymbol{v}\in\boldsymbol{L}^{2}(\Omega);\boldsymbol{\nabla}\wedge\boldsymbol{v}\in\boldsymbol{L}^{2}(\Omega)
    \},
\]
and
%\[
%    \boldsymbol{H}_{0}(\curl;\Omega)=\{\boldsymbol{v}\in\boldsymbol{H}(\curl;\Omega);
%    \boldsymbol{v}\t\boldsymbol{\nu}=\boldsymbol{0}~\mbox{on}~\partial\Omega\},\\
%\]
\[
    \boldsymbol{H}(\diver;\Omega)=\{\boldsymbol{v}\in\boldsymbol{L}^{2}(\Omega);\boldsymbol{\nabla}\cdot\boldsymbol{v}\in L^{2}(\Omega)
    \}.
\]
{\color{black}
They are equipped with the norms, respectively,
\[
\|\bold{v}\|_{\boldsymbol{H}(\curl;\Omega)}=\pare{\|\bold{v}\|^2_{\bold{L}^2(\Omega)}+\|\boldsymbol{\nabla}\wedge \bold{v}\|^2_{\bold{L}^2(\Omega)}}^{1/2},
\]
and
\[
\|\bold{v}\|_{\boldsymbol{H}(\diver;\Omega)}=\pare{\|\bold{v}\|^2_{\bold{L}^2(\Omega)}+\|\boldsymbol{\nabla}\cdot \bold{v}\|^2_{L^2(\Omega)}}^{1/2}.
\]}
%\[
%    \boldsymbol{H}_{0}(\diver;\Omega)=\{\boldsymbol{v}\in\boldsymbol{H}(\diver;\Omega);
%    \boldsymbol{v}\cdot\boldsymbol{\nu}=0~\mbox{on}~\partial\Omega\},\\
%\]
{\color{black}Define}
\[
    \bold{X}:=\boldsymbol{H}(\curl;\Omega)\cap\boldsymbol{H}(\diver;\Omega),
\]
{\color{black}with the norm
\[
\|\bold{v}\|_{\bold{X}}=\pare{\|\bold{v}\|^2_{\bold{L}^2(\Omega)}+\|\boldsymbol{\nabla}\wedge \bold{v}\|^2_{\bold{L}^2(\Omega)}+\|\boldsymbol{\nabla}\cdot \bold{v}\|^2_{L^2(\Omega)}}^{1/2}.
\]}
%each of which is equipped with the graph norm and two subspaces of $\bold{X}$
{\color{black}The following subspaces of $\bold{X}$ are the main solution spaces used in this paper:}
\[\bold{X}_N:=\{\boldsymbol{v}\in\boldsymbol{X};
\boldsymbol{\nu}\t\boldsymbol{v}=\boldsymbol{0}~\mbox{on}~\partial\Omega\},
\quad\bold{X}_T:=\{\boldsymbol{v}\in\boldsymbol{X};
\boldsymbol{v}\cdot\boldsymbol{\nu}=\boldsymbol{0}~\mbox{on}~\partial\Omega\}.
\]

\begin{lem}[\cite{Weber1980}]\label{compact}
	The spaces $\bold{X}_N$ and $\bold{X}_T$ are compactly embedded into $\bold{L}^2(\Om)$ when $\Omega$ is a bounded and connected Lipschitz domain.
\end{lem}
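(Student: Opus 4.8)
The plan is to prove this as a sequential compactness statement by combining a Helmholtz-type decomposition with scalar and vector elliptic regularity and then invoking the Rellich--Kondrachov theorem. It suffices to treat $\bold{X}_N$: the argument for $\bold{X}_T$ is entirely parallel, with the Dirichlet problem for the scalar potential replaced by a Neumann problem and the roles of the tangential trace $\boldsymbol{\nu}\t\boldsymbol{v}$ and the normal trace $\boldsymbol{v}\cdot\boldsymbol{\nu}$ interchanged. So let $\{\boldsymbol{v}_n\}\subset\bold{X}_N$ be bounded; I aim to extract a subsequence converging in $\bold{L}^2(\Om)$.

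First I would split each field. Let $p_n\in H^1_0(\Om)$ solve the Dirichlet problem
\[
\Delta p_n=\nabla\cdot\boldsymbol{v}_n\ \m{in}\ \Om,\qquad p_n=0\ \m{on}\ \p\Om,
\]
and set $\boldsymbol{w}_n:=\boldsymbol{v}_n-\nabla p_n$. Since $p_n=0$ on $\p\Om$ forces the surface gradient of its boundary trace to vanish, one has $\boldsymbol{\nu}\t\nabla p_n=\boldsymbol{0}$, so $\nabla p_n\in\bold{X}_N$ and hence $\boldsymbol{w}_n\in\bold{X}_N$ as well; moreover $\nabla\cdot\boldsymbol{w}_n=0$ and $\nabla\t\boldsymbol{w}_n=\nabla\t\boldsymbol{v}_n$. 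For the irrotational part, $\nabla\cdot\boldsymbol{v}_n$ is bounded in $L^2(\Om)$, so by the regularity theory for the Dirichlet Laplacian on a Lipschitz domain $\{p_n\}$ is bounded in $H^{3/2}(\Om)$ (it suffices to gain $H^{1+s}$ for some $s>0$); since $H^{3/2}(\Om)\hookrightarrow H^1(\Om)$ compactly, a subsequence of $\{p_n\}$ converges in $H^1(\Om)$, i.e. $\{\nabla p_n\}$ converges in $\bold{L}^2(\Om)$.

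It then remains to handle the solenoidal part $\{\boldsymbol{w}_n\}$, whose members are divergence-free, satisfy $\boldsymbol{\nu}\t\boldsymbol{w}_n=\boldsymbol{0}$, and have curls bounded in $\bold{L}^2(\Om)$. The crucial step — and the main obstacle — is a uniform gain of fractional regularity for such fields. On a smooth or convex domain this follows from a Gaffney-type identity obtained by integration by parts, which controls $\|\boldsymbol{w}\|_{\boldsymbol{H}^1}$ by $\|\nabla\t\boldsymbol{w}\|_{\bold{L}^2}+\|\nabla\cdot\boldsymbol{w}\|_{\bold{L}^2}+\|\boldsymbol{w}\|_{\bold{L}^2}$ and yields a continuous embedding into $\boldsymbol{H}^1(\Om)$. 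On a merely Lipschitz domain this full $H^1$ bound fails, because the boundary term in that identity involves the (now singular) curvature of $\p\Om$. I would therefore introduce a vector potential $\boldsymbol{\psi}_n$ with $\boldsymbol{w}_n=\nabla\t\boldsymbol{\psi}_n$ and $\nabla\cdot\boldsymbol{\psi}_n=0$ (modulo a finite-dimensional space of harmonic fields, which is harmless for compactness and absent here since $\Om$ is simply connected) and, using the regularity theory for the associated second-order elliptic system on Lipschitz domains, show that $\{\boldsymbol{w}_n\}$ is bounded in $\boldsymbol{H}^{s}(\Om)$ for some $s\in(0,1]$; one may take $s=\tfrac12$.

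Once a uniform $\boldsymbol{H}^{s}(\Om)$-bound with $s>0$ is in hand, the compact embedding $\boldsymbol{H}^{s}(\Om)\hookrightarrow\bold{L}^2(\Om)$ delivers a subsequence of $\{\boldsymbol{w}_n\}$ convergent in $\bold{L}^2(\Om)$. Adding the two convergent subsequences produces an $\bold{L}^2(\Om)$-convergent subsequence of $\{\boldsymbol{v}_n\}$, which establishes the compactness of $\bold{X}_N\hookrightarrow\bold{L}^2(\Om)$; the case of $\bold{X}_T$ follows mutatis mutandis. I expect the fractional regularity of the solenoidal part on a Lipschitz domain to be the delicate point, since it is precisely there that the boundary geometry enters and where one cannot simply appeal to Rellich for $\boldsymbol{H}^1$ fields.
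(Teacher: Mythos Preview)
The paper gives no proof of this lemma at all: it is stated with a citation to Weber's 1980 paper and used as a black box. So there is nothing in the paper to compare your argument against, and any correct self-contained proof you supply goes well beyond what the authors do.

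Your overall strategy---split off a gradient via a scalar Dirichlet problem, gain fractional Sobolev regularity on each piece, and invoke Rellich--Kondrachov---is a legitimate modern route, but be aware that the ingredients you lean on are themselves deep. The $H^{3/2}$ bound for the Dirichlet Laplacian with $L^2$ right-hand side on a merely Lipschitz domain is the Jerison--Kenig theorem, not an elementary estimate. More seriously, your treatment of the solenoidal part is underspecified: writing $\boldsymbol{w}_n=\nabla\wedge\boldsymbol{\psi}_n$ with $\nabla\cdot\boldsymbol{\psi}_n=0$ gives $-\Delta\boldsymbol{\psi}_n=\nabla\wedge\boldsymbol{v}_n\in\bold{L}^2$, but you have not said what boundary condition $\boldsymbol{\psi}_n$ satisfies, and without one there is no ``associated second-order elliptic system'' whose regularity theory you can invoke. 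The result you actually want---that $\bold{X}_N$ and $\bold{X}_T$ embed continuously into $\boldsymbol{H}^{1/2}(\Omega)$ on a Lipschitz domain---is due to Costabel and is not proved via a vector potential; it uses layer-potential and duality techniques, and it postdates (and in fact refines) Weber's compactness theorem. Weber's own proof is more direct, based on localization and a Fourier argument rather than on fractional regularity. If you want a self-contained argument, either cite Costabel's $H^{1/2}$ embedding explicitly in place of the vague vector-potential step, or follow Weber's original line; as written, the solenoidal step is a gap.
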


\subsection{Surface gradient}
Let $\Gamma$ be a regular piece of $\partial \Omega$ and be represented in the following parametric form,
\begin{equation}\label{eq:pf1}
\boldsymbol{x}(\boldsymbol{u})=\big({{x}}_1(u_1,u_2), x_2(u_1,u_2), x_3(u_1,u_2)\big)^{\mathrm{T}},\ \ \boldsymbol{u}=(u_1,u_2)^{\mathrm{T}}\in\mathbb{R}^2,
\end{equation}
 Denote by $g=(g_{jk})_{j,k=1}^2$ the first fundamental matrix of differential geometry for $\Gamma$ with
\begin{equation*}\label{eq:fdmm}
g_{jk}:=\frac{\partial \boldsymbol{x}}{\partial u_j}\cdot\frac{\partial \boldsymbol{x}}{\partial u_k},\ \ \ j, k=1,2.
\end{equation*}
Henceforth, we assume that the parametric form \eqref{eq:pf1} is such chosen that
\begin{equation*}\label{eq:pf2}
{\boldsymbol{\nu}}={\boldsymbol{\nu}}({\boldsymbol{x}})=\frac{1}{\sqrt{|g|}}\frac{\partial \boldsymbol{x}}{\partial u_1}\wedge\frac{\partial \boldsymbol{x}}{\partial u_2},\ \ \ |g|:=\mbox{det}(g),
\end{equation*}
is the outward unit normal vector of $\partial \Omega$ on $\Gamma$.
We denote by $\mathrm{Grad}_\Gamma$ the surface gradient operator on $\Gamma$ (cf. \cite{ColtonKress2012,Nedelec2001}).
Recall that for a sufficiently smooth function $\varphi$ defined in an open neighborhood of $\Gamma$ we have
\begin{equation*}\label{eq:sg1}
\mathrm{Grad}_\Gamma\varphi=\sum_{j,k=1}^2 g^{jk}\frac{\partial\varphi}{\partial u_j}\frac{\partial {\boldsymbol{x}}}{\partial u_k},
\end{equation*}
where
\begin{equation*}\label{eq:sg2}
(g^{jk})_{j,k=1}^2:=\big[(g_{jk})_{j,k=1}^2\big ]^{-1}.
\end{equation*}
%The mean curvature at $\bold{x}\in\Gamma$ is given by \cite{Nedelec2001}
%\begin{equation*}\label{eq:matrix1}
%\mathcal{D}[{\boldsymbol{\nu}}]({\boldsymbol{x}}):=\Big[\mathrm{Grad}_\Gamma{{\nu}}_1({\boldsymbol{x}}), \mathrm{Grad}_\Gamma{{\nu}}_2({\boldsymbol{x}}), \mathrm{Grad}_\Gamma{{\nu}}_3({\boldsymbol{x}})\Big],\quad \boldsymbol{x}\in\Gamma.
%\end{equation*}
{\color{black}We define}
\begin{equation}\label{eq:H}
\mathcal{S}(\bold{x})=\mathcal{S}_\Gamma({\boldsymbol{x}}):=\frac{1}{2}\sum_{l=1}^3\Big[\mathrm{Grad}_\Gamma{{\nu}}_l\Big]_l.
\end{equation}

\section{Decoupling under the weak formulations}
In this section, we present our main results of this paper.
An application of the Helmholtz decomposition to the source $\bold{f}\in\bold{L}^2(\Om)$ yields $\bold{f}=\bold{f}_p+\bold{f}_s$ with $\bold{\nabla}\t\bold{f}_p=\bold{0}$ and $\bold{\nabla}\cdot\bold{f}_s=0$.
As a consequence, the solution $\bold{u}$ to \eqref{lame-sys}
may be formally split as
\begin{equation}\label{hd_dispfield}
\bold{u}=\bold{u}_p+\bold{u}_s,
\end{equation}
where
\[%\begin{equation}\label{hd_dispfield_aux}
\bold{u}_p:=-\frac{1}{k_p^2}(\bold{\nabla}(\bold{\nabla}\cdot\bold{u})+\frac{1}{\lam+2\mu}\bold{f}_p)\quad\text{and}\quad
\bold{u}_s=\frac{1}{k_s^2}(\bold{\nabla}\t(\bold{\nabla}\t\bold{u})-\frac{1}{\mu}\bold{f}_s),
\]%\end{equation}
with $k_p:=\om/\sqrt{\lam+2\mu}$ and $k_s:=\om/\sqrt{\mu}$.
The vector fields $\bold{u}_p$ and $\bold{u}_s$ are referred to as the pressure (longitudinal) and shear (transversal) parts of $\bold{u}$, respectively. It is easy to see
\[
    \bold{\nabla}\cdot\bold{u}=\bold{\nabla}\cdot\bold{u}_p,\quad
    \bold{\nabla}\t\bold{u}=\bold{\nabla}\t\bold{u}_s.
\]
In this sectin, we shall decouple $\bold{u}$ by deriving a Helmholtz equation and a Maxwell system with some suitable boundary conditions for $\bold{\nabla}\cdot\bold{u}$ and $\bold{\nabla}\t\bold{u}$, respectively.
%It is obtained that they satisfy
%\begin{equation}\label{diffequ_p}
%-\Delta\bold{u}_p-k_p^2\bold{u}_p=\frac{1}{\lam+2\mu}\bold{f}_p,\quad\bold{\nabla}\t\bold{u}_p=\bold{0}\quad \m{in}~\Om,
%\end{equation}
%and
%\begin{equation}\label{diffequ_s}
%-\Delta\bold{u}_s-k_s^2\bold{u}_s=\frac{1}{\mu}\bold{f}_s,\quad\bold{\nabla}\cdot\bold{u}_s=\bold{0}\quad \m{in}~\Om.
%\end{equation}

\subsection{The fourth kind boundary condition}

We first deal with \eqref{lame-sys} with the fourth kind boundary condition \eqref{4th-bc}. Before introducing the weak formulation, we recall a decoupling result built in \cite{liuxiao}, for the elastic scattering problem governed by the Lam\'{e} system \eqref{lame-sys} with zero source. It should be noted that the relevant proof does not involve the right hand side of \eqref{lame-sys}.

\begin{lem}[\cite{liuxiao}]\label{lem:4thBCequiv}
	Let $\bold{u}\in\bold{H}^2(\Om)$ solve \eqref{lame-sys} with $\bold{f}\in\bold{L}^2(\Om)$. %and the fourth kind boundary condition \eqref{4th-bc}.
	If $\mathcal{S}(\bold{x})=0$ on a connected piece $\Gamma$ of $\partial\Omega$, then the fourth kind boundary condition \eqref{4th-bc} implies
	\begin{equation}\label{4th-bc_equiv}
	\bold{\nabla}\cdot\bold{u}=0\quad{\color{black}\mbox{on}~\Gamma.}%,\quad\bold{\nu}\t(\bold{\nabla}\t(\bold{\nabla}\t\bold{u}))=\frac{1}{\mu}\bold{\nu}\t\bold{f}
	\end{equation}
\end{lem}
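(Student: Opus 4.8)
The plan is to translate the two parts of the fourth-kind boundary condition \eqref{4th-bc} into information about $\boldsymbol{\nabla}\cdot\bold{u}$ on $\Gamma$ by means of a normal--tangential splitting of the divergence. The point of departure is that $\boldsymbol{\nu}\t\bold{u}=\bold{0}$ forces $\bold{u}$ to be purely normal on $\Gamma$, that is $\bold{u}=u_\nu\boldsymbol{\nu}$ with $u_\nu:=\boldsymbol{\nu}\cdot\bold{u}$, so that all tangential components of $\bold{u}$ vanish there.

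First I would contract the traction operator with $\boldsymbol{\nu}$. In the second expression for $\bold{Tu}$ the term $\mu\,\boldsymbol{\nu}\t(\boldsymbol{\nabla}\t\bold{u})$ is orthogonal to $\boldsymbol{\nu}$ and drops out, leaving
\[
\boldsymbol{\nu}\cdot\bold{Tu}=2\mu\,(\boldsymbol{\nu}\cdot\partial_{\boldsymbol{\nu}}\bold{u})+\lambda\,(\boldsymbol{\nabla}\cdot\bold{u}).
\]
Next I would use the pointwise splitting of the full divergence,
\[
\boldsymbol{\nabla}\cdot\bold{u}=\boldsymbol{\nu}\cdot\partial_{\boldsymbol{\nu}}\bold{u}+\mathrm{Div}_\Gamma\bold{u},
\]
obtained by decomposing each $\boldsymbol{\nabla}u_l$ into a normal part $(\boldsymbol{\nu}\cdot\boldsymbol{\nabla}u_l)\boldsymbol{\nu}$ and a tangential part $\mathrm{Grad}_\Gamma u_l$, with $\mathrm{Div}_\Gamma\bold{u}:=\sum_l[\mathrm{Grad}_\Gamma u_l]_l$. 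Eliminating the normal derivative between the last two identities gives
\[
\boldsymbol{\nu}\cdot\bold{Tu}=(\lambda+2\mu)(\boldsymbol{\nabla}\cdot\bold{u})-2\mu\,\mathrm{Div}_\Gamma\bold{u},
\]
so the condition $\boldsymbol{\nu}\cdot\bold{Tu}=0$ from \eqref{4th-bc} reduces the whole claim to showing $\mathrm{Div}_\Gamma\bold{u}=0$ on $\Gamma$.

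The crucial computation, which I expect to be the main point, is to evaluate $\mathrm{Div}_\Gamma\bold{u}$ using the boundary relation $\bold{u}=u_\nu\boldsymbol{\nu}$. Since the surface gradient depends only on boundary values, I may substitute $u_l=u_\nu\nu_l$ and apply the product rule,
\[
\mathrm{Grad}_\Gamma u_l=\nu_l\,\mathrm{Grad}_\Gamma u_\nu+u_\nu\,\mathrm{Grad}_\Gamma\nu_l.
\]
Taking the $l$-th component and summing over $l$ yields $\mathrm{Div}_\Gamma\bold{u}=\boldsymbol{\nu}\cdot\mathrm{Grad}_\Gamma u_\nu+u_\nu\sum_l[\mathrm{Grad}_\Gamma\nu_l]_l$. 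The first term vanishes because $\mathrm{Grad}_\Gamma u_\nu$ is tangential, hence orthogonal to $\boldsymbol{\nu}$, while the second term is exactly $2\mathcal{S}(\bold{x})\,u_\nu$ by the definition \eqref{eq:H}. Thus $\mathrm{Div}_\Gamma\bold{u}=2\mathcal{S}(\bold{x})\,u_\nu$, which vanishes on $\Gamma$ under the hypothesis $\mathcal{S}(\bold{x})=0$. Feeding this back into the preceding display and using $\lambda+2\mu>0$ then forces $\boldsymbol{\nabla}\cdot\bold{u}=0$ on $\Gamma$, as desired. The only delicate points are the justification of the componentwise surface-divergence identity as a trace relation (legitimate since $\bold{u}\in\bold{H}^2(\Om)$) and a clean handling of the product-rule expansion; both become routine once the normal--tangential splitting of the gradient is in hand.
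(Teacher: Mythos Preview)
Your argument is correct and self-contained. Note, however, that the paper does not supply its own proof of this lemma: it is stated with a citation to \cite{liuxiao} and used as a black box, so there is no in-paper proof to compare against. Your normal--tangential splitting of the divergence together with the product-rule computation $\mathrm{Div}_\Gamma(u_\nu\boldsymbol{\nu})=\boldsymbol{\nu}\cdot\mathrm{Grad}_\Gamma u_\nu+2\mathcal{S}(\bold{x})u_\nu$ is exactly the kind of local boundary analysis the introduction attributes to \cite{liuxiao}, so your approach is presumably close in spirit to the original one; in any case it stands on its own and yields the conclusion cleanly once $\lambda+2\mu>0$ is invoked.
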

%\begin{proof}
%	The first identity in \eqref{4th-bc_equiv} has been proven in \cite{liuxiao}.
%	We are left to verify the second identity in \eqref{4th-bc_equiv}.
%	In fact, the second equation in \eqref{4th-bc_equiv} can be derived as follows. It is not difficult to see from \eqref{hd_dispfield} that
%	\[
%	\begin{aligned}
%	&\frac{1}{k_s^2}\bold{\nu}\t(\bold{\nabla}\t(\bold{\nabla}\t\bold{u}))-\frac{1}{\om^2}\bold{\nu}\t\bold{f_s}
%	\\&=\bold{\nu}\t\bold{u}+\frac{1}{k_p^2}\bold{\nu}\t((\bold{\nabla}(\bold{\nabla}\cdot\bold{u}))+\frac{1}{\om^2}\bold{\nu}\t\bold{f}_p\quad \m{on}~\p\Om,
%	\end{aligned}	
%	\]
%	which, together with the first equations in \eqref{4th-bc} and \eqref{4th-bc_equiv}, yields the desired result.		
%\end{proof}

%Recall the formula
%\[
%\bold{\nabla}\cdot(\bold{v}\wedge\bold{w})
%=\bold{\nabla}\wedge\bold{v}\cdot\bold{w}
%-\bold{\nabla}\wedge\bold{w}\cdot\bold{v}.
%\]
Suppose $\bold{u}\in \bold{H}^2(\Om)$ solves \eqref{lame-sys} with the fourth kind boundary condition \eqref{4th-bc}. By integrating by parts we get for any $\bold{v}\in\bold{X}_N$
\begin{equation}\label{4th-bc_aux1}
%\begin{split}
(\bold{\nabla}\t\bold{\nabla}\t\bold{u},\bold{v})
=(\bold{\nabla}\t\bold{u},\bold{\nabla}\t\bold{v})
%+(\bold{\nabla}\t\bold{u},\bold{\nu}\t\bold{v})_{\p\Om}
%=(\bold{\nabla}\t\bold{u},\bold{\nabla}\t\bold{v}),
%\end{split}
\end{equation}
and
\begin{equation}\label{4th-bc_aux2}
(\bold{\nabla}(\bold{\nabla}\cdot\bold{u}),\bold{v})
=(\bold{\nabla}\cdot\bold{u},\bold{\nabla}\cdot\bold{v})
-\langle\bold{\nabla}\cdot\bold{u},\bold{v}\cdot\bold{\nu}\rangle_{\p\Om}.
\end{equation}
where $\langle\cdot,\cdot\rangle_{\partial \Omega}$ is the duality pairing between $H^{1/2}(\p\Om)$ and $H^{-1/2}(\p\Om)$ since the normal trace space of $\bold{H}(\mathrm{div};\Om)$ is $H^{-1/2}(\p\Om)$ (cf. e.g. \cite{Monk2003}). Therefore, assuming that $\p\Om=\cup_{j=1}^{m}\Gamma_{j}$ and $S(\bold{x})=0$ on each connected piece $\Gamma_j$, it follows from \eqref{4th-bc_aux1}, \eqref{4th-bc_aux2} and \eqref{4th-bc_equiv}
in Lemma \ref{lem:4thBCequiv} that
%$\p\Om=\Gamma S(\bold{x})$ $\bold{u}$ a solution to \eqref{lame-sys} with the fourth kind boundary condition \eqref{4th-bc}, there holds
\[%\begin{equation}\label{vp_aux}
\begin{aligned}
(-\Delta^\ast\bold{u},\bold{v})
&=(\mu\bold{\nabla}\t\bold{\nabla}\t\bold{u},\bold{v})-((\lam+2\mu)\bold{\nabla}(\bold{\nabla}\cdot\bold{u}),\bold{v})\\
&=(\mu\bold{\nabla}\t\bold{u},\bold{\nabla}\t\bold{v})+((\lam+2\mu)\bold{\nabla}\cdot\bold{u},\bold{\nabla}\cdot\bold{v}),
\end{aligned}
\]%\end{equation}
which, together with \eqref{lame-sys}, implies the solution $\bold{u}$ satisfies
\[
    (\mu\bold{\nabla}\t\bold{u},\bold{\nabla}\t\bold{v})+((\lam+2\mu)\bold{\nabla}\cdot\bold{u},\bold{\nabla}\cdot\bold{v})-\om^2(\bold{u},\bold{v})=(\bold{f},\bold{v})\quad\forall~\bold{v}\in\bold{X}_N.
\]

Now the variational formulation of \eqref{lame-sys} with the fourth kind boundary condition \eqref{4th-bc}, provided that $\mathcal{S}(\bold{x})=0$ on $\p\Om$, is to seek $\bold{u}=\bold{u}([\Om,\textrm{IV}])\in\bold{X}_N$ such that
\begin{equation}\label{vp_4th-bc}
a_4(\bold{u},\bold{v})=(\bold{f},\bold{v})\quad\forall~\bold{v}\in\bold{X}_N,
\end{equation}
where for any $\bold{v},\bold{w}\in \bold{X}_N$, the bilinear form $a_4(\cdot,\cdot)$ is defined as
\begin{equation}
a_4(\bold{v},\bold{w}):=(\mu\bold{\nabla}\t\bold{v},\bold{\nabla}\t\bold{w})+((\lam+2\mu)\bold{\nabla}\cdot\bold{v},\bold{\nabla}\cdot\bold{w})-\om^2(\bold{v},\bold{w}).
\end{equation}

%By far, we have obtained a variational formulation for the Lam\'{e} system \eqref{lame-sys} with the fourth kind boundary condition \eqref{4th-bc}, provided that $\mathcal{S}=0$ a.e. on $\p\Om$.
%Next, we will give the full weak formulation for the corresponding boundary value problem, and prove the well-posedness.

%Then the boundary value problem \eqref{lame-sys} with \eqref{4th-bc} can be expressed as: to find $\bold{u}\in\bold{X}_N$ such that
%\begin{equation}\label{vp_4th-bc}
%a_4(\bold{u},\bold{v})=(\bold{f},\bold{v})\quad\forall~\bold{v}\in\bold{X}_N.
%\end{equation}

\begin{thm}\label{thm:4thweak}
	For all $\omega$ but excluding a countable set of value, given any $\bold{f}\in \bold{L}^2(\Omega)$, there exists a unique $\bold{u}\in \bold{X}_N$ satisfies \eqref{vp_4th-bc}.
	Moreover,
	\begin{equation}\label{vp_4th-bc_est}
	\|\bold{u}\|_{\bold{X}_N}\le C\|\bold{f}\|_{\bold{L}^2(\Omega)},
	\end{equation}
	with some constant $C$ independent of $\bold{f}$.
\end{thm}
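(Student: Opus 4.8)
The plan is to recognize \eqref{vp_4th-bc} as a compact perturbation of a coercive problem and to apply the Fredholm alternative together with the spectral theorem for compact self-adjoint operators. First I would split the bilinear form by writing, for a fixed constant $\gamma>0$,
\[
a_4(\bold v,\bold w)=\tilde a(\bold v,\bold w)-(\om^2+\gamma)(\bold v,\bold w),
\]
where $\tilde a(\bold v,\bold w):=(\mu\bold{\nabla}\t\bold v,\bold{\nabla}\t\bold w)+((\lam+2\mu)\bold{\nabla}\cdot\bold v,\bold{\nabla}\cdot\bold w)+\gamma(\bold v,\bold w)$. Since $\mu>0$, $\lam+2\mu>0$ and $\gamma>0$, testing against $\bold v$ gives $\tilde a(\bold v,\bold v)\ge\min\{\mu,\lam+2\mu,\gamma\}\,\|\bold v\|_{\bold X}^2$, so $\tilde a$ is bounded and coercive on $\bold X_N$ with respect to its inherited $\bold X$-norm. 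By the Lax--Milgram theorem, for every $\bold h\in\bold L^2(\Om)$ there is a unique $G\bold h\in\bold X_N$ with $\tilde a(G\bold h,\bold v)=(\bold h,\bold v)$ for all $\bold v\in\bold X_N$, and $G:\bold L^2(\Om)\to\bold X_N$ is bounded.

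Next I would exploit the compactness supplied by Lemma \ref{compact}. Write $\iota:\bold X_N\hookrightarrow\bold L^2(\Om)$ for the embedding, which is compact by Lemma \ref{compact}. Rewriting \eqref{vp_4th-bc} as $\tilde a(\bold u,\bold v)=((\om^2+\gamma)\bold u+\bold f,\bold v)$ and applying $G$ turns it into the operator equation
\[
\bigl(I-\hat K\bigr)\bold u=G\bold f\ \text{ in }\bold X_N,\qquad \hat K:=(\om^2+\gamma)\,G\iota .
\]
Since $\iota$ is compact and $G$ is bounded, $\hat K:\bold X_N\to\bold X_N$ is compact.

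Then I would invoke the Fredholm alternative for $I-\hat K$: unique solvability for every $\bold f$ is equivalent to injectivity, i.e.\ to the homogeneous problem $a_4(\bold u,\bold v)=0$ for all $\bold v\in\bold X_N$ admitting only the trivial solution. To control the exceptional $\om$, I would pass to the operator $\mathcal K:=\iota G:\bold L^2(\Om)\to\bold L^2(\Om)$, which is compact, and self-adjoint and positive because $\tilde a$ is symmetric and coercive (indeed $(\mathcal K\bold h,\bold h)=\tilde a(G\bold h,G\bold h)\ge0$, vanishing only for $\bold h=\bold 0$). A nontrivial homogeneous solution $\bold u$ satisfies $G\bold u=(\om^2+\gamma)^{-1}\bold u$, hence is an eigenfunction of $\mathcal K$ for the eigenvalue $(\om^2+\gamma)^{-1}$; conversely every eigenfunction of $\mathcal K$ for a nonzero eigenvalue lies in $\bold X_N$. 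By the spectral theorem $\mathcal K$ has a countable eigenvalue sequence $\mu_n>0$ with $\mu_n\to0$, so injectivity of $I-\hat K$ can fail only for the countable set $\om^2=\mu_n^{-1}-\gamma$, i.e.\ for a countable set of $\om$. For all other $\om$, $I-\hat K$ is boundedly invertible, $\bold u=(I-\hat K)^{-1}G\bold f$ is the unique solution, and $\|\bold u\|_{\bold X_N}\le\|(I-\hat K)^{-1}\|\,\|G\|\,\|\bold f\|_{\bold L^2(\Om)}$, which gives \eqref{vp_4th-bc_est}.

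The coercivity step is immediate here because the positive constants $\mu$ and $\lam+2\mu$ directly control $\|\bold{\nabla}\t\bold v\|$ and $\|\bold{\nabla}\cdot\bold v\|$; the genuine engine is the compact embedding of Lemma \ref{compact}, without which the lower-order term $-\om^2(\cdot,\cdot)$ would not yield a compact perturbation. I expect the point requiring most care to be the assertion that the exceptional set is \emph{countable} rather than merely discrete or possibly all of $\mathbb{R}$: this is exactly what self-adjointness buys us, since it guarantees a genuine eigenvalue sequence $\mu_n\to0$ for $\mathcal K$ and thereby pins down the excluded $\om$ explicitly. Alternatively, one could reach the same conclusion via the analytic Fredholm theorem applied to the entire operator-valued map $\om^2\mapsto I-\hat K$, observing that the map is invertible at $\om^2=-\gamma$.
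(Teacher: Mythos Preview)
Your proof is correct and follows essentially the same route as the paper: the paper defines $\bold K_N$ via $a_4(\bold K_N\bold w,\bold v)+(1+\omega^2)(\bold K_N\bold w,\bold v)=(\bold w,\bold v)$, which is precisely your $G$ with the specific choice $\gamma=1$, rewrites \eqref{vp_4th-bc} as $\bold u-(\omega^2+1)\bold K_N\bold u=\bold K_N\bold f$, and then invokes Lemma~\ref{compact} and the Fredholm alternative. The only difference is that the paper leaves the countability of the exceptional $\omega$-set implicit in the phrase ``Fredholm alternative theory,'' whereas you spell it out via the self-adjointness of $\mathcal K=\iota G$ and the spectral theorem; this extra paragraph is a genuine addition of detail rather than a different method.
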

\begin{rem}
 Theorem~\ref{thm:4thweak} implies that \eqref{vp_4th-bc} is a well-posed forward problem even if there is no geometric condition $\mathcal{S}(\bold{x})=0$ on $\p\Om$.
 %the associated boundary value problem an independent  It is equivalent to the Lam\'{e} system \eqref{lame-sys} with the fourth kind boundary condition \eqref{4th-bc} when $\mathcal{S}=0$ a.e. on $\p\Om$.
\end{rem}

\begin{proof}
    For any $\bold{w}\in\bold{L}^2(\Om)$, we define an operator $\bold{K}_N:\bold{L}^2(\Om)\to \bold{X}_N$ by
    \[
        a_4(\bold{K}_N\bold{w},\bold{v})+(1+\omega^2)(\bold{K}_N\bold{w},\bold{v})
    =(\bold{w},\bold{v}),\quad \forall \bold{v}\in \bold{X}_N.
    \]
    Since it is easy to check that $a_4(\cdot,\cdot)+(1+\omega^2)(\cdot,\cdot)$ is bounded and coercive on $\bold{X}_N$ with respect to the graph norm, by the Lax-Milgram lemma $\bold{K}_N\bold{w}$ is well-defined and satisfies
    %we know that for any $\bold{w}\in \bold{L}^2(\Omega)\subset\pare{\bold{X}_N}^*$ there exists a unique $\bold{K}_N\bold{w}\in \bold{X}_N$ satisfying
%%    \[
%    a_4(\bold{K}_N\bold{w},\bold{v})+(1+\omega^2)(\bold{K}_N\bold{w},\bold{v})
%    =(\bold{w},\bold{v}),\quad \forall \bold{v}\in \bold{X}_N.
%    \]
%    Moreover, there is a constant $C_1$ such that
    \[
    \|\bold{K}_N\bold{w}\|_{\bold{X}}\le C_1 \|\bold{w}\|_{\bold{L}^2(\Omega)}.
    \]
    Hence $\bold{K}_N:\bold{L}^2(\Om)\to \bold{X}_N$ is linear and bounded. This allows us to further rewrite \eqref{vp_4th-bc} as the problem of finding $\bold{u}\in\bold{X}_N$ such that
    %\[
%    a_4(\bold{u},\bold{v})+(1+\omega^2)(\bold{u},\bold{v})
%    =a_4(\bold{K}_N\bold{f},\bold{v})+(1+\omega^2)a_4(\bold{K}_N\bold{u},\bold{v}),\quad\forall~\bold{v}\in\bold{X}_N,
%    \]
%    or equivalently
     \[
        \bold{u}-(\omega^2+1)\bold{K}_N\bold{u}=\bold{K}_N\bold{f}.
     \]
    Since $\bold{X}_N$ is compactly embedded into $\bold{L}^2(\Om)$, $\bold{K}_N:\bold{X}_N\to\bold{X}_N$ is compact. The Fredholm alternative theory implies the unique solvability of \eqref{vp_4th-bc} and the estimate \eqref{vp_4th-bc_est}.
    %Now the problem \eqref{vp_4th-bc} can be reformulated as, to find  $\bold{u}\in\bold{X}_N$ satisfying
%    \[
%    a_4(\bold{u},\bold{v})+(1+\omega^2)(\bold{u},\bold{v})
%    =(\bold{f}+(1+\omega^2)\bold{u},\bold{v}),\quad\forall~\bold{v}\in\bold{X}_N,
%    \]
%    namely,
%    \begin{equation}\label{eq:temp}
%    \bold{u}-(1+\omega^2)\bold{K}_N\bold{u} =   \bold{K}_N\bold{f}.
%    \end{equation}
%    Since $\bold{X}_N$ is compactly embedded into $\bold{L}^2(\Om)$, we know that $\bold{K}_N$, as an operator from $\bold{X}_N$ to itself, is compact.
%    Therefore, The Fredholm alternative theory guarantees a unique solution $\bold{u}$ of \eqref{eq:temp}, when $1/(1+\omega^2)$ is not a eigenvalue of $\bold{K}_N$.
%    Moreover, one has that
%    \[
%    \|\bold{u}\|_{\bold{X}_N}\le C_2 \|\bold{K}_N\bold{f}\|_{\bold{X}_N}
%    \le C\|\bold{f}\|_{\bold{L}^2(\Omega)}.
%    \]
\end{proof}

The following theorem retrieves the boundary condition in \eqref{4th-bc_equiv} from the variational formulation \eqref{vp_4th-bc}

\begin{thm}\label{thm:4thweakequ}
    Suppose that the solution $\bold{u}=\bold{u}[\Om;IV]$ to \eqref{vp_4th-bc} is in $\bold{H}^2(\Om)$. Then it solves \eqref{lame-sys} with
    \begin{equation}\label{4th-bc_aux}
        \bold{\nu}\t\bold{u}=\bold{0},\quad\bold{\nabla}\cdot\bold{u}=0\quad\mbox{on}~\p\Om.
    \end{equation}
\end{thm}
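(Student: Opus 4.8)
The first condition $\bold{\nu}\t\bold{u}=\bold{0}$ requires no work: it is built into the solution space, since $\bold{u}\in\bold{X}_N$. The substance of the theorem is therefore to recover the interior equation \eqref{lame-sys} together with the extra boundary condition $\bold{\nabla}\cdot\bold{u}=0$ on $\p\Om$ purely from the variational identity \eqref{vp_4th-bc}, using the added regularity $\bold{u}\in\bold{H}^2(\Om)$. The guiding idea is to undo the integration by parts that produced the weak form, and then to read off the interior equation and the natural boundary condition separately.

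The plan is first to move the derivatives back onto $\bold{u}$. Because $\bold{u}\in\bold{H}^2(\Om)$, the fields $\bold{\nabla}\t\bold{u}$ and $\bold{\nabla}\cdot\bold{u}$ are regular enough to apply the Green identities underlying \eqref{4th-bc_aux1} and \eqref{4th-bc_aux2} in the reverse direction. In the curl term the boundary contribution $\langle\bold{\nu}\t(\bold{\nabla}\t\bold{u}),\bold{v}\rangle_{\p\Om}$ vanishes, since $(\bold{\nu}\t\bold{w})\cdot\bold{v}=-\bold{w}\cdot(\bold{\nu}\t\bold{v})$ and $\bold{\nu}\t\bold{v}=\bold{0}$ for $\bold{v}\in\bold{X}_N$; in the divergence term the boundary pairing $\langle\bold{\nabla}\cdot\bold{u},\bold{v}\cdot\bold{\nu}\rangle_{\p\Om}$ is retained. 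Invoking the splitting \eqref{atwo} of $\Delta^\ast$, I expect to arrive at
\[
(-\Delta^\ast\bold{u}-\om^2\bold{u}-\bold{f},\bold{v})+(\lam+2\mu)\langle\bold{\nabla}\cdot\bold{u},\bold{v}\cdot\bold{\nu}\rangle_{\p\Om}=0\qquad\forall\,\bold{v}\in\bold{X}_N.
\]

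Next I would decouple the volume and boundary parts. Restricting the test functions to $\bold{v}\in\bold{C}_0^\infty(\Om)\subset\bold{X}_N$ annihilates the boundary pairing, so the fundamental lemma of the calculus of variations, via the density of $\bold{C}_0^\infty(\Om)$ in $\bold{L}^2(\Om)$, yields the Lam\'e system $-\Delta^\ast\bold{u}-\om^2\bold{u}=\bold{f}$ in $\Om$. Feeding this back into the displayed identity removes the volume term and leaves
\[
\langle\bold{\nabla}\cdot\bold{u},\bold{v}\cdot\bold{\nu}\rangle_{\p\Om}=0\qquad\forall\,\bold{v}\in\bold{X}_N.
\]

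The remaining, and genuinely delicate, step is to conclude $\bold{\nabla}\cdot\bold{u}=0$ on $\p\Om$ from the vanishing of this pairing; this is where the constraint $\bold{\nu}\t\bold{v}=\bold{0}$ defining $\bold{X}_N$ must be handled with care, since it restricts the admissible test fields. The key observation is that the normal traces $\{\bold{v}\cdot\bold{\nu}:\bold{v}\in\bold{X}_N\}$ already exhaust a dense subset of $H^{-1/2}(\p\Om)$: given any smooth $g$ on $\p\Om$, one can take a smooth vector field that coincides with $g\bold{\nu}$ in a neighbourhood of $\p\Om$ (extend $\bold{\nu}$ and $g$ inward and multiply by a cut-off), which lies in $\bold{H}^1(\Om)\subset\bold{X}$, satisfies $\bold{\nu}\t\bold{v}=g\,(\bold{\nu}\t\bold{\nu})=\bold{0}$ on $\p\Om$, hence belongs to $\bold{X}_N$, and has normal trace exactly $g$. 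Since $\bold{u}\in\bold{H}^2(\Om)$ forces $\bold{\nabla}\cdot\bold{u}\in H^1(\Om)$ and thus $\bold{\nabla}\cdot\bold{u}|_{\p\Om}\in H^{1/2}(\p\Om)$, the vanishing of $\langle\bold{\nabla}\cdot\bold{u},g\rangle_{\p\Om}$ over a dense family of $g$, together with the non-degeneracy of the duality pairing between $H^{1/2}(\p\Om)$ and $H^{-1/2}(\p\Om)$, gives $\bold{\nabla}\cdot\bold{u}=0$ on $\p\Om$. I expect this density of the constrained normal-trace map to be the main obstacle, resolved by the explicit construction of purely normal boundary fields above.
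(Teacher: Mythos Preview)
Your proposal is correct and follows essentially the same route as the paper: integrate by parts to recover the Lam\'e equation from compactly supported test functions, then read off $\bold{\nabla}\cdot\bold{u}=0$ on $\p\Om$ by producing test fields in $\bold{X}_N$ with prescribed normal trace. The only cosmetic difference is that the paper localizes to each regular piece $\Gamma$ of the piecewise-smooth boundary and invokes the trace theorem to obtain $\bold{v}\in\bold{H}^1(\Om)$ with $\bold{v}\cdot\bold{\nu}=\phi\in C^\infty_c(\Gamma)$ and vanishing tangential trace, whereas your global extension of $g\bold{\nu}$ would implicitly need the same localization to avoid edges where $\bold{\nu}$ is discontinuous.
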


\begin{proof}
    By integration by parts and \eqref{atwo}, it is easy to check that the solution to \eqref{vp_4th-bc} also solves \eqref{lame-sys}. The first boundary condition in \eqref{4th-bc_aux} follows directly from the definition of $\bold{X}_N$. On any regular piece $\Gamma$ of $\p\Om$, for any $\phi\in C^{\infty}_c(\Gamma)$, by the trace theorem for Sobolev spaces \cite{Monk2003} there exists a $\bold{v}\in \bold{H}^1(\Om)$ such that
    \[
        \bold{\nu}\t\bold{v}=\bold{0}\quad\mbox{on}~\p\Om,
        \quad
            \bold{v}\cdot\bold{\nu}=\left\{\begin{array}{ll}
            \phi\quad\mbox{on}~\Gamma,\\
            0\quad\mbox{on the rest of}~{\color{black}\p\Om}.
                \end{array}
        \right.
        \]
    It is clear such a $\bold{v}\in \bold{X}_N$. Substituting it into \eqref{vp_4th-bc}, integrating by parts and noting \eqref{lame-sys}, we obtain
    \[
        ((\lam+2\mu)\bold{\nabla}\cdot\bold{u},\phi)_{\Gamma}=0\quad\forall~\phi\in C^{\infty}_c(\Gamma).
    \]
    As is $\Gamma$ is arbitrary, we obtain the second equation in \eqref{4th-bc_aux}.
\end{proof}

\begin{rem}\label{rem:vp-4th-bc2}
If $\mathcal{S}(\bold{x})=0$ is further imposed on $\p\Om$, then the two boundary conditions in \eqref{4th-bc_aux}, together with the arguments in the proof of \cite[Corollary 2.1]{liuxiao}, lead to the second boundary condition in \eqref{4th-bc}. This indicates that \eqref{vp_4th-bc} is equivalent to \eqref{lame-sys} and \eqref{4th-bc} if $\mathcal{S}(\bold{x})=0$ on $\p\Om$.
\end{rem}

We are now in a position to derive the following decoupling result based on the weak formulation \eqref{vp_4th-bc}.

\begin{thm}\label{thm:4thdecoup}
	Suppose the solution $\bold{u}=\bold{u}([\Om,\textrm{IV}])$ to \eqref{vp_4th-bc} is in $\bold{H}^3(\Om)$ and $\bold{f}\in\bold{H}^1(\Om)$.
	%Assume that $\mathcal{S}=0$ on $\partial\Omega$ with $\mathcal{S}$ defined in \eqref{eq:H}.
	If we set $v_p=v_p([\Om,\textrm{IV}])=:-\bold{\nabla}\cdot\bold{u}$, then $v_p$ solves
	\begin{equation}\label{hel_4th-bc}
	\left\{\begin{array}{ll}
	\Delta v_p+k_p^2v_p=\frac{1}{\lam+2\mu}\bold{\nabla}\cdot\bold{f}\quad\m{in}~\Om,\\
	v_p=0\quad\m{on}~\p\Om.
	\end{array}
	\right.
	\end{equation}
	If we set $\bold{E}_s=\bold{E}_s([\Om,\textrm{IV}]):=\bold{\nabla}\t\bold{u}$, then $\bold{E}_s$ solves the following Maxwell system:
	\bb\label{max_4th-bc}
	\left\{\begin{array}{ll} \bold{\nabla}\t(\bold{\nabla}\t\bold{E}_s)-k_s^2\bold{E}_s=\frac{1}{\mu}\bold{\nabla}\t\bold{f}\quad\m{in}~\Om,\\
        \bold{\nabla}\cdot\bold{E}_s=0\quad\m{in}~\Om,\\
		\bold{\nu}\t(\bold{\nabla}\t\bold{E}_s)=
		\frac{1}{\mu}\bold{\nu}\t\bold{f}\quad\m{on}~\p\Om.
	\end{array}\right.
	\ee
\end{thm}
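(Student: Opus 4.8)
The plan is to start from Theorem~\ref{thm:4thweakequ}, which guarantees that the $\bold{H}^3$-solution $\bold{u}$ to \eqref{vp_4th-bc} actually solves the strong form \eqref{lame-sys} together with the boundary conditions \eqref{4th-bc_aux}, i.e. $\bold{\nu}\t\bold{u}=\bold{0}$ and $\bold{\nabla}\cdot\bold{u}=0$ on $\p\Om$. The two decoupled systems are then obtained by applying the divergence and the curl operators to \eqref{lame-sys}, after rewriting $-\Delta^\ast\bold{u}$ via \eqref{atwo} as $\mu\bold{\nabla}\t(\bold{\nabla}\t\bold{u})-(\lam+2\mu)\bold{\nabla}(\bold{\nabla}\cdot\bold{u})$. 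The hypotheses $\bold{u}\in\bold{H}^3(\Om)$ and $\bold{f}\in\bold{H}^1(\Om)$ ensure that $v_p$, $\bold{E}_s$, and the relevant right-hand sides lie in the spaces needed for both the interior equations and the boundary traces to be meaningful.

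For the Helmholtz equation I would take the divergence of \eqref{lame-sys}. Since $\bold{\nabla}\cdot(\bold{\nabla}\t\bold{w})=0$ for any $\bold{w}$, the curl term drops out, while $\bold{\nabla}\cdot\bold{\nabla}(\bold{\nabla}\cdot\bold{u})=\Delta(\bold{\nabla}\cdot\bold{u})$. Writing $v_p=-\bold{\nabla}\cdot\bold{u}$ and dividing through by $\lam+2\mu$ yields the interior equation in \eqref{hel_4th-bc}, with $k_p^2=\om^2/(\lam+2\mu)$. The homogeneous Dirichlet condition $v_p=0$ on $\p\Om$ is immediate from the second relation in \eqref{4th-bc_aux}.

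For the Maxwell system, the solenoidal condition $\bold{\nabla}\cdot\bold{E}_s=\bold{\nabla}\cdot(\bold{\nabla}\t\bold{u})=0$ holds automatically. Taking the curl of \eqref{lame-sys} annihilates the gradient term because $\bold{\nabla}\t\bold{\nabla}(\cdot)=\bold{0}$, leaving $\mu\bold{\nabla}\t(\bold{\nabla}\t\bold{E}_s)-\om^2\bold{E}_s=\bold{\nabla}\t\bold{f}$, which is the interior equation after dividing by $\mu$ and using $k_s^2=\om^2/\mu$. The boundary condition needs more care: instead I would rearrange \eqref{lame-sys} itself into $\mu\bold{\nabla}\t\bold{E}_s=\bold{f}+(\lam+2\mu)\bold{\nabla}(\bold{\nabla}\cdot\bold{u})+\om^2\bold{u}$ and then apply $\bold{\nu}\t(\cdot)$ on $\p\Om$. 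The term $\om^2\,\bold{\nu}\t\bold{u}$ vanishes by the first condition in \eqref{4th-bc_aux}.

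The main obstacle is showing that $\bold{\nu}\t\bold{\nabla}(\bold{\nabla}\cdot\bold{u})=\bold{0}$ on $\p\Om$. The key observation is that $\bold{\nabla}\cdot\bold{u}$ vanishes identically on the whole surface $\p\Om$, so its tangential variation is zero there; concretely, the tangential--normal decomposition $\bold{\nabla}(\bold{\nabla}\cdot\bold{u})=\mathrm{Grad}_\Gamma(\bold{\nabla}\cdot\bold{u})+(\bold{\nu}\cdot\bold{\nabla}(\bold{\nabla}\cdot\bold{u}))\bold{\nu}$ reduces on each regular piece $\Gamma$ of $\p\Om$ to a purely normal field, whence $\bold{\nu}\t\bold{\nabla}(\bold{\nabla}\cdot\bold{u})=\bold{0}$ since $\bold{\nu}\t\bold{\nu}=\bold{0}$. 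Consequently $\mu\,\bold{\nu}\t(\bold{\nabla}\t\bold{E}_s)=\bold{\nu}\t\bold{f}$ on $\p\Om$, which is precisely the last equation in \eqref{max_4th-bc}, completing the derivation of both decoupled problems.
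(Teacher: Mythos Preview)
Your argument is correct. Both your proof and the paper's rely on Theorem~\ref{thm:4thweakequ} to obtain the strong equation \eqref{lame-sys} and the boundary conditions \eqref{4th-bc_aux}, and both derive the Maxwell boundary condition by applying $\boldsymbol{\nu}\wedge(\cdot)$ to \eqref{lame-sys} on $\partial\Omega$; in fact, your treatment of the term $\boldsymbol{\nu}\wedge\boldsymbol{\nabla}(\boldsymbol{\nabla}\cdot\bold{u})$ via the surface-gradient decomposition is more explicit than the paper's terse ``noting \eqref{4th-bc_aux}''. The only procedural difference is in the interior equations: the paper stays within the variational framework and substitutes the special test functions $\bold{v}=\boldsymbol{\nabla}\phi$ and $\bold{v}=\boldsymbol{\nabla}\wedge\bold{F}$ (with $\phi\in C_0^\infty(\Omega)$, $\bold{F}\in\bold{C}_0^\infty(\Omega)$) directly into \eqref{vp_4th-bc}, then integrates by parts, whereas you apply $\boldsymbol{\nabla}\cdot$ and $\boldsymbol{\nabla}\wedge$ to the already-recovered strong form \eqref{lame-sys}. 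Since Theorem~\ref{thm:4thweakequ} has already delivered the strong equation, the two routes are equivalent; your version is slightly more direct, while the paper's choice of test functions emphasizes its ``weak formulation perspective'' theme.
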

\begin{proof}
	For any $\phi\in C_0^\infty(\Om)$, integration by parts yields
	\begin{align*}
	a_4(\bold{u},\bold{\nabla}\phi)
	&=
	((\lam+2\mu)\bold{\nabla}\cdot\bold{u},\bold{\nabla}\cdot\bold{\nabla}\phi)-\om^2(\bold{u},\bold{\nabla}\phi)
	\\&=-((\lam+2\mu)\bold{\nabla}(\bold{\nabla}\cdot\bold{u}),\bold{\nabla}\phi)+\om^2(\bold{\nabla}\cdot\bold{u},\phi).
	%&=-((\lam+2\mu)\Delta v_p,\phi)-\om^2(v_p,\phi).
	\end{align*}
	In addition, we have
	\[
	(\bold{f},\bold{\nabla}\phi)=-(\bold{\nabla}\cdot\bold{f},\phi).
	\]
	Therefore from \eqref{vp_4th-bc} and the second equation in \eqref{4th-bc_aux} we arrive at the Helmholtz system \eqref{hel_4th-bc} with $v_p=-\bold{\nabla}\cdot\bold{u}$. Next we turn our attention to \eqref{max_4th-bc}. Taking $\bold{v}=\bold{\nabla}\t\bold{F}$ for any $\bold{F}\in \bold{C}^\infty_0(\Om)$ in \eqref{vp_4th-bc} leads to
	\begin{align*}
	a_4(\bold{u},\bold{\nabla}\t\bold{F})
	=&
	(\mu\bold{\nabla}\t\bold{u},\bold{\nabla}\t\bold{\nabla}\t\bold{F})-\om^2(\bold{u},\bold{\nabla}\t\bold{F})
	\\=&(\mu\bold{\nabla}\t\bold{\nabla}\t(\bold{\nabla}\t\bold{u}),\bold{F})-\om^2(\bold{\nabla}\t\bold{u},\bold{F}),
	\end{align*}
	and
	\[
	(\bold{f},\bold{\nabla}\t\bold{F})=(\nabla\t\bold{f},\bold{F}).
	\]
	Hence we obtain the first equation and the second equation in \eqref{max_4th-bc} with $\bold{E}_s=\bold{\nabla}\t\bold{u}$. It remains to prove the third boundary condition. From \eqref{thm:4thweakequ}, we know $\bold{u}$ solves \eqref{lame-sys}. This implies
\[
    \mu\bold{\nu}\t\bold{\nabla}\t(\bold{\nabla}\t\bold{u})-
    (\lambda+2\mu)\bold{\nu}\t\bold{\nabla}(\bold{\nabla}\cdot\bold{u})-\om^2\bold{\nu}\t\bold{u}=\bold{\nu}\t\bold{f}\quad\mbox{on}~\p\Om.
\]
Noting \eqref{4th-bc_aux} in Theorem \ref{thm:4thweakequ}, we come to the conclusion.
\end{proof}

\begin{rem}\label{rem:vp-4th-bc3}
    In the proof of Theorem \ref{thm:4thweak}, we also derive a boundary condition
    \[
        \bold{\nu}\t\bold{\nabla}\t(\bold{\nabla}\t\bold{u})=\frac{1}{\mu}\bold{\nu}\t\bold{f}\quad\mbox{on}~\p\Om.
    \]
    This and the second boundary condition in \eqref{4th-bc_aux} have
    been deduced from \eqref{lame-sys} and \eqref{4th-bc} with
    $\bold{f}=\bold{0}$ under the assumption that $\mathcal{S}(\bold {x})=0$ on $\p\Om$ in \cite{liuxiao} and then the decoupling result \eqref{hel_4th-bc} and \eqref{max_4th-bc} is established while the geometric assumption is not used here; also see the proof of Theorem \ref{thm:4thweakequ}. This is not a conflict because the variational formulation \eqref{vp_4th-bc} is derived based on the {\color{black}geometric assumption.} %assumption that the mean curvature vanishes on $\p\Om$.
    In other words, \eqref{vp_4th-bc} is independent of $\mathcal{S}(\bold{x})$ due to the geometric assumption $\mathcal{S}(\bold{x})=0$ on $\p\Om$.
\end{rem}

%\begin{rem}
%    It is worth mentioning that Theorems \ref{thm:4thweak}-\ref{thm:4thdecoup} hold without the geometric assumption $\mathcal{S}(\bold{x})=0$ on $\p\Om$.
%\end{rem}

\subsection{The third kind boundary condition}
In this subsection, we treat \eqref{lame-sys} with the third kind boundary condition \eqref{3rd-bc}. As before, we recall a result for the third kind boundary condition \eqref{3rd-bc} built in \cite{liuxiao}.
%For the 3rd-kind boundary condition, following the proof of \cite[Theorem 2.3]{liuxiao} we get
\begin{lem}[\cite{liuxiao}]\label{lem:3rdBCequiv}
	Let $\bold{u}$ solve \eqref{lame-sys} with  $\bold{f}\in\bold{L}^2(\Om)$. If a connected piece $\Gamma$ of $\p\Om$ is flat, then the third kind boundary condition \eqref{3rd-bc} implies
	%If $\Omega$ is a Lipschitz polyhedral domain, then we have
	\bb\label{3rd-bc_equiv}
	%\bold{\nu}\cdot\bold{\nabla}(\bold{\nabla}\cdot\bold{u})=-\frac{1}{\lam+2\mu}\bold{\nu}\cdot\bold{f},\quad
	\bold{\nu}\t(\bold{\nabla}\t\bold{u})=\bold{0}\quad\m{on}~\Gamma,
	\ee
\end{lem}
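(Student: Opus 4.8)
```latex
The plan is to prove Lemma~\ref{lem:3rdBCequiv} by a direct local computation on the flat piece $\Gamma$, paralleling the structure that must underlie Lemma~\ref{lem:4thBCequiv}. The idea is to exploit the flatness of $\Gamma$ so that the outward normal $\boldsymbol{\nu}$ is locally constant, which trivializes all the curvature (second fundamental form) terms that would otherwise appear when differentiating boundary quantities tangentially. First I would fix a point on $\Gamma$ and choose coordinates so that $\boldsymbol{\nu}=\boldsymbol{e}_3$ is constant in a neighborhood, reducing $\Gamma$ to a portion of the plane $\{x_3=0\}$; then $\mathcal{S}(\bold{x})$ in \eqref{eq:H} vanishes automatically, but more importantly the tangential derivatives $\partial_1,\partial_2$ commute with projection onto $\boldsymbol{\nu}$.

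The key step is to decompose the traction condition $\boldsymbol{\nu}\wedge\bold{Tu}=\bold{0}$ from \eqref{3rd-bc} together with $\boldsymbol{\nu}\cdot\bold{u}=0$ using the identity \eqref{atwo} for $\bold{Tu}$, namely $\bold{Tu}=2\mu\partial_{\boldsymbol{\nu}}\bold{u}+\lambda\,\boldsymbol{\nu}\,\boldsymbol{\nabla}\cdot\bold{u}+\mu\,\boldsymbol{\nu}\wedge(\boldsymbol{\nabla}\wedge\bold{u})$. Taking $\boldsymbol{\nu}\wedge$ of this expression and using $\boldsymbol{\nu}\wedge\boldsymbol{\nu}=\bold{0}$ eliminates the pressure-type term $\lambda\,\boldsymbol{\nu}\,\boldsymbol{\nabla}\cdot\bold{u}$, leaving a relation between $\boldsymbol{\nu}\wedge\partial_{\boldsymbol{\nu}}\bold{u}$ and $\boldsymbol{\nu}\wedge(\boldsymbol{\nu}\wedge(\boldsymbol{\nabla}\wedge\bold{u}))$. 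The latter is, up to sign, the tangential part of $\boldsymbol{\nabla}\wedge\bold{u}$, so the condition $\boldsymbol{\nu}\wedge\bold{Tu}=\bold{0}$ couples the normal derivative of $\bold{u}$ to the tangential components of $\boldsymbol{\nabla}\wedge\bold{u}$. The goal \eqref{3rd-bc_equiv} asserts precisely that this tangential part vanishes, i.e. $\boldsymbol{\nu}\wedge(\boldsymbol{\nabla}\wedge\bold{u})=\bold{0}$ on $\Gamma$.

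Next I would use the constraint $\boldsymbol{\nu}\cdot\bold{u}=0$ on $\Gamma$ to control the remaining term $\boldsymbol{\nu}\wedge\partial_{\boldsymbol{\nu}}\bold{u}$. In the flat coordinates, $\boldsymbol{\nu}\cdot\bold{u}=u_3=0$ along $\Gamma$ forces the tangential derivatives $\partial_1 u_3=\partial_2 u_3=0$ on $\Gamma$; combined with the structure of $\boldsymbol{\nabla}\wedge\bold{u}$, whose tangential components involve exactly the mixed derivatives $\partial_{\boldsymbol{\nu}}u_1,\partial_{\boldsymbol{\nu}}u_2$ and these vanishing tangential derivatives of $u_3$, one can express $\boldsymbol{\nu}\wedge\partial_{\boldsymbol{\nu}}\bold{u}$ in terms of the tangential part of $\boldsymbol{\nabla}\wedge\bold{u}$. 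Substituting back into the traction identity then yields a self-contained equation whose only solution is $\boldsymbol{\nu}\wedge(\boldsymbol{\nabla}\wedge\bold{u})=\bold{0}$, which is \eqref{3rd-bc_equiv}.

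The main obstacle I anticipate is the bookkeeping of tangential versus normal components when unwinding $\boldsymbol{\nu}\wedge(\boldsymbol{\nabla}\wedge\bold{u})$ and $\boldsymbol{\nu}\wedge\partial_{\boldsymbol{\nu}}\bold{u}$: one must verify carefully that the flatness hypothesis is genuinely what kills the curvature contributions, and that no second-order geometric term survives. Because the paper cites this as Lemma~\ref{lem:3rdBCequiv} from \cite{liuxiao}, the cleanest route is arguably to invoke the parallel computation in the proof of \cite[Corollary 2.1]{liuxiao} referenced in Remark~\ref{rem:vp-4th-bc2}, adapting the roles of $\boldsymbol{\nu}\cdot$ and $\boldsymbol{\nu}\wedge$ to the third-kind setting; the flatness assumption $\Gamma$ flat plays here the analogue of $\mathcal{S}(\bold{x})=0$ in Lemma~\ref{lem:4thBCequiv}, and verifying this precise correspondence is the crux of the argument.
```
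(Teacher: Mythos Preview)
The paper does not supply its own proof of Lemma~\ref{lem:3rdBCequiv}; the result is simply quoted from \cite{liuxiao}. So there is no in-paper argument to compare against, and your plan is in effect supplying what the paper omits.

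Your approach is correct and is the natural direct computation. To confirm the crux: in flat coordinates with $\boldsymbol{\nu}=\boldsymbol{e}_3$, the condition $\boldsymbol{\nu}\cdot\bold{u}=0$ gives $u_3=0$ and hence $\partial_1 u_3=\partial_2 u_3=0$ on $\Gamma$, so on $\Gamma$ one has $\boldsymbol{\nu}\wedge(\boldsymbol{\nabla}\wedge\bold{u})=(-\partial_3 u_1,-\partial_3 u_2,0)$. Meanwhile $\boldsymbol{\nu}\wedge\partial_{\boldsymbol{\nu}}\bold{u}=(-\partial_3 u_2,\partial_3 u_1,0)$ and $\boldsymbol{\nu}\wedge(\boldsymbol{\nu}\wedge(\boldsymbol{\nabla}\wedge\bold{u}))=(\partial_3 u_2,-\partial_3 u_1,0)=-\boldsymbol{\nu}\wedge\partial_{\boldsymbol{\nu}}\bold{u}$. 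Plugging into $\boldsymbol{\nu}\wedge\bold{Tu}=2\mu\,\boldsymbol{\nu}\wedge\partial_{\boldsymbol{\nu}}\bold{u}+\mu\,\boldsymbol{\nu}\wedge(\boldsymbol{\nu}\wedge(\boldsymbol{\nabla}\wedge\bold{u}))=\mu\,\boldsymbol{\nu}\wedge\partial_{\boldsymbol{\nu}}\bold{u}=\bold{0}$ forces $\partial_3 u_1=\partial_3 u_2=0$, and \eqref{3rd-bc_equiv} follows. Your phrase ``self-contained equation whose only solution is'' slightly overstates the mechanism: nothing needs to be solved, the two tangential vectors are exact negatives of one another (after the double cross product), so the coefficients $2\mu$ and $\mu$ combine to a single nonzero multiple. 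Apart from that wording, the plan is sound and the flatness hypothesis is used exactly where you say, to kill $\partial_1 u_3,\partial_2 u_3$ by tangentially differentiating $u_3=0$.
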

%\begin{proof}
%The second equation in \eqref{lem:3rdBCequiv} has been proven in \cite{liuxiao}.
%For the first identity, using \eqref{hd_dispfield}, we have
%\begin{equation*}
%-\frac{1}{k_p^2}\bold{\nu}\cdot(\bold{\nabla}(\bold{\nabla}\cdot\bold{u}))-\frac{1}{\om^2}\bold{\nu}\cdot\bold{f}_p
%=\bold{\nu}\cdot\bold{u}-\frac{1}{k_s^2}\bold{\nu}\cdot(\bold{\nabla}\t(\bold{\nabla}\t\bold{u}))+\frac{1}{\om^2}\bold{\nu}\cdot\bold{f}_s\quad \m{on}~\p\Om.
%\end{equation*}
%Since $\Omega$ is a polyhedral we have
%\begin{equation}\label{3rd-bc_aux1}
%\bold{\nu}\cdot(\bold{\nabla}\t(\bold{\nabla}\t\bold{u}))=-\mathrm{Div}_{\Gamma}(\bold{\nu}\t(\bold{\nabla}\t\bold{u}))=0,
%\end{equation}
%where $\mathrm{Div}_{\Gamma}$ denotes the surface divergence operator on a regular piece $\Gamma$ of $\partial\Om$.
%Hence along with the first condition in \eqref{3rd-bc} and \eqref{3rd-bc_aux1} we complete the proof.
%\end{proof}

Suppose $\bold{u}\in \bold{H}^2(\Om)$ solves \eqref{lame-sys} and \eqref{3rd-bc}. Arguing as before, we use integration by parts to get for any $\bold{v}\in\bold{X}_T$
\begin{align*}
    -(\Delta^\ast\bold{u},\bold{v})=&(\mu\bold{\nabla}\t\bold{\nabla}\t\bold{u},\bold{v})-((\lam+2\mu)\bold{\nabla}(\bold{\nabla}\cdot\bold{u}),\bold{v})\nb\\
    =&(\mu\bold{\nabla}\t\bold{u},\bold{\nabla}\t\bold{v})+((\lam+2\mu)\bold{\nabla}\cdot\bold{u},\bold{\nabla}\cdot\bold{v})%\label{vp_4th_aux}
    \\&+\langle\mu\bold{\nu}\t(\bold{\nabla}\t\bold{u}),\bold{v}\rangle_{\p\Om}\nb.
\end{align*}
Further assuming $\Om$ is a Lipschitz polyhedron, this, \eqref{3rd-bc_equiv} in Lemma \ref{lem:3rdBCequiv} and \eqref{lame-sys} tell that $\bold{u}$ also solves
\[
    (\mu\bold{\nabla}\t\bold{u},\bold{\nabla}\t\bold{v})+((\lam+2\mu)\bold{\nabla}\cdot\bold{u},\bold{\nabla}\cdot\bold{v})-\om^2(\bold{u},\bold{v})=(\bold{f},\bold{v})\quad\forall~\bold{v}\in\bold{X}_T.
\]
%
%Therefore, based on Lemma~\ref{lem:3rdBCequiv}, we can derive a variational formulation for the Lam\'e system \eqref{lame-sys} associated with the third kind boundary condition \eqref{3rd-bc}, provided that $\Omega$ is a Lipschitz polyhedral domain.
%Thus, , the variational formulation of \eqref{lame-sys} with the third kind boundary condition \eqref{3rd-bc} is to seek $\bold{u}=\bold{u}[\Om;III]\in\bold{X}_T$ such that
%For any $\bold{v},\bold{w}\in \bold{X}_T$,
%\begin{equation}
%a_3(\bold{v},\bold{w}):=(\mu\bold{\nabla}\t\bold{v},\bold{\nabla}\t\bold{w})+((\lam+2\mu)\bold{\nabla}\cdot\bold{v},\bold{\nabla}\cdot\bold{w})-\om^2(\bold{v},\bold{w}),
%\end{equation}
%where
For any $\bold{v},\bold{w}\in \bold{X}_T$, the bilinear form $a_3(\cdot,\cdot)$ is defined as
\begin{equation}
a_3(\bold{v},\bold{w}):=(\mu\bold{\nabla}\t\bold{v},\bold{\nabla}\t\bold{w})+((\lam+2\mu)\bold{\nabla}\cdot\bold{v},\bold{\nabla}\cdot\bold{w})-\om^2(\bold{v},\bold{w}).
\end{equation}
Thus, if $\Om$ is a Lipschitz polyhedron the boundary value problem \eqref{lame-sys} with the third kind boundary condition \eqref{3rd-bc} can be expressed as finding $\bold{u}=\bold{u}[\Om;III]\in\bold{X}_T$ such that
\begin{equation}\label{vp_3rd-bc}
a_3(\bold{u},\bold{v})=(\bold{f},\bold{v})\quad\forall~\bold{v}\in\bold{X}_T.
\end{equation}

\begin{thm}\label{thm:3rdweak}
	For all $\omega$ but excluding a countable set of value, given any $\bold{f}\in \bold{L}^2(\Omega)$, there exists a unique $\bold{u}\in \bold{X}_T$ satisfies \eqref{vp_3rd-bc}.
	Moreover,
	\begin{equation}\label{vp_3rd-bc_est}
	\|\bold{u}\|_{\bold{X}_T}\le C\|\bold{f}\|_{\bold{L}^2(\Omega)},
	\end{equation}
	with some constant $C$ independent of $\bold{f}$.
\end{thm}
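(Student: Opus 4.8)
The plan is to mirror the Fredholm-alternative argument already carried out for Theorem~\ref{thm:4thweak}, replacing the space $\bold{X}_N$ by $\bold{X}_T$ and the bilinear form $a_4$ by $a_3$. Since $a_3$ and $a_4$ share the same algebraic expression, the only structural ingredients that need to be re-examined in the present setting are the coercivity of a suitably shifted form on $\bold{X}_T$ and the compactness of the associated solution operator; both are available here.

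First I would introduce the shifted solution operator $\bold{K}_T:\bold{L}^2(\Om)\to\bold{X}_T$ defined by
\[
a_3(\bold{K}_T\bold{w},\bold{v})+(1+\om^2)(\bold{K}_T\bold{w},\bold{v})=(\bold{w},\bold{v}),\quad\forall~\bold{v}\in\bold{X}_T.
\]
The key computation is that, for every $\bold{v}\in\bold{X}_T$, the indefinite term cancels exactly, giving
\[
a_3(\bold{v},\bold{v})+(1+\om^2)(\bold{v},\bold{v})=\mu\|\bold{\nabla}\t\bold{v}\|^2_{\bold{L}^2(\Om)}+(\lam+2\mu)\|\bold{\nabla}\cdot\bold{v}\|^2_{L^2(\Om)}+\|\bold{v}\|^2_{\bold{L}^2(\Om)}.
\]
Because $\mu>0$ and $\lam+2\mu>0$, the right-hand side is bounded below by $\min(1,\mu,\lam+2\mu)\,\|\bold{v}\|^2_{\bold{X}}$, so the shifted form is bounded and coercive with respect to the graph norm on $\bold{X}_T$. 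The Lax--Milgram lemma then ensures that $\bold{K}_T$ is well defined, linear and bounded, with $\|\bold{K}_T\bold{w}\|_{\bold{X}}\le C_1\|\bold{w}\|_{\bold{L}^2(\Om)}$.

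Next I would recast \eqref{vp_3rd-bc} as the operator equation
\[
\bold{u}-(\om^2+1)\bold{K}_T\bold{u}=\bold{K}_T\bold{f}
\]
in $\bold{X}_T$. By Lemma~\ref{compact}, $\bold{X}_T$ is compactly embedded into $\bold{L}^2(\Om)$ whenever $\Om$ is a bounded connected Lipschitz domain; since a Lipschitz polyhedron belongs to this class, the operator $\bold{K}_T:\bold{X}_T\to\bold{X}_T$ is compact. The Fredholm alternative theory then yields, for all $\om$ outside a countable exceptional set, the unique solvability of \eqref{vp_3rd-bc} together with the a priori estimate \eqref{vp_3rd-bc_est}.

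I do not anticipate a genuinely hard step, as the argument is essentially identical to that of Theorem~\ref{thm:4thweak}. The only points meriting attention are that the compact embedding of Lemma~\ref{compact} must be invoked for the tangential-trace space $\bold{X}_T$ rather than for $\bold{X}_N$, and that it is precisely the polyhedral (hence Lipschitz) hypothesis on $\Om$ that places us within the scope of that lemma.
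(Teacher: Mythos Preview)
Your proposal is correct and follows essentially the same approach as the paper: define the shifted solution operator $\bold{K}_T$ via the Lax--Milgram lemma, rewrite \eqref{vp_3rd-bc} as $\bold{u}-(\om^2+1)\bold{K}_T\bold{u}=\bold{K}_T\bold{f}$, and invoke the compact embedding of Lemma~\ref{compact} together with the Fredholm alternative. The paper's proof is in fact slightly terser, merely citing Theorem~\ref{thm:4thweak} rather than spelling out the coercivity computation; one minor point is that the polyhedral hypothesis is not actually needed here (cf.\ the remark following the theorem), since the standing assumption on $\Om$ already guarantees a Lipschitz boundary.
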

\begin{rem}
As in the case of the fourth kind boundary condition, Theorem \ref{thm:3rdweak} implies \eqref{vp_3rd-bc} is a well-posed problem regardless of whether $\Om$ is a Lipschitz polyhedron.
%In the same way as Theorem~\ref{thm:4thweak}, the result in Theorem~\ref{thm:3rdweak}  guarantees the corresponding boundary value problem as an independent well-posed problem. It is equivalent to the Lam\'{e} system \eqref{lame-sys} with the third kind boundary condition \eqref{3rd-bc} when $\Omega$ is a bounded Lipschitz polyhedral domain.
\end{rem}

\begin{proof}
    As in the proof of Theorem \ref{thm:4thweak},  \eqref{vp_3rd-bc} is reformulated as an operator equation
\[
    \bold{u}-(\om^2+1)\bold{K}_T\bold{u}=\bold{K}_T\bold{f},
\]
with $\bold{K}_T:\bold{L}^2(\Om)\to\bold{X}_T$ defined by
 \[
a_3(\bold{K}_T\bold{w},\bold{v})+(1+\omega^2)(\bold{K}_T\bold{w},\bold{v})
=(\bold{w},\bold{v}),\quad \forall \bold{v}\in \bold{X}_T.
\]
The compact embedding of $\bold{X}_T$ into $\bold{L}^2(\Om)$ (cf. Lemma \ref{compact}) and the Fredholm alternative theory ensures the well-posedness of \eqref{vp_3rd-bc} and \eqref{vp_3rd-bc_est}. %in the same way as the proof of Theorem \ref{thm:4thweak}.
	%We first show that \color{red} $a_3(\cdot,\cdot)+...(\cdot,\cdot)$ is a bounded and coercive bilinear form in $\bold{X}_T$.
	%.......
\end{proof}

\begin{thm}\label{thm:3rdweakequ}
    Suppose that the solution $\bold{u}=\bold{u}[\Om;III]$ to \eqref{vp_3rd-bc} is in $\bold{H}^2(\Om)$. Then it solves \eqref{lame-sys} with
    \begin{equation}\label{3rd-bc_aux}
        \bold{\nu}\cdot\bold{u}=0,\quad\bold{\nu}\t(\bold{\nabla}\t\bold{u})=\bold{0},
\quad\mbox{on}~\p\Om.
    \end{equation}
\end{thm}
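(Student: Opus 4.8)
The plan is to mirror the proof of Theorem~\ref{thm:4thweakequ}, with the roles of the normal and tangential traces interchanged, since for the third kind condition the \emph{natural} boundary term produced by integration by parts is the curl term $\langle\mu\bold{\nu}\t(\bold{\nabla}\t\bold{u}),\bold{v}\rangle_{\p\Om}$ rather than the divergence term. First I would recover the Lam\'e system in the interior: taking $\bold{v}=\bold{\phi}\in\bold{C}^\infty_0(\Om)$ in \eqref{vp_3rd-bc}, every boundary contribution drops out, so integrating by parts in $a_3(\bold{u},\bold{\phi})$ and invoking \eqref{atwo} gives $a_3(\bold{u},\bold{\phi})=(-\Delta^\ast\bold{u}-\om^2\bold{u},\bold{\phi})$. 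Since $\bold{u}\in\bold{H}^2(\Om)$ guarantees $\Delta^\ast\bold{u}\in\bold{L}^2(\Om)$, comparing with $(\bold{f},\bold{\phi})$ and using the density of $\bold{C}^\infty_0(\Om)$ in $\bold{L}^2(\Om)$ yields $-\Delta^\ast\bold{u}-\om^2\bold{u}=\bold{f}$ a.e. in $\Om$, i.e. \eqref{lame-sys}. The first boundary condition $\bold{\nu}\cdot\bold{u}=0$ in \eqref{3rd-bc_aux} is then immediate from the membership $\bold{u}\in\bold{X}_T$.

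For the remaining condition I would isolate the boundary term. Using the integration-by-parts identity established just before Theorem~\ref{thm:3rdweak}, which is valid for any $\bold{v}\in\bold{X}_T$ once $\bold{u}\in\bold{H}^2(\Om)$, one has
\[
a_3(\bold{u},\bold{v})=(-\Delta^\ast\bold{u}-\om^2\bold{u},\bold{v})-\langle\mu\bold{\nu}\t(\bold{\nabla}\t\bold{u}),\bold{v}\rangle_{\p\Om}.
\]
Substituting \eqref{lame-sys} and the weak identity $a_3(\bold{u},\bold{v})=(\bold{f},\bold{v})$ leaves $\langle\mu\bold{\nu}\t(\bold{\nabla}\t\bold{u}),\bold{v}\rangle_{\p\Om}=0$ for all $\bold{v}\in\bold{X}_T$. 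Because $\bold{u}\in\bold{H}^2(\Om)$ gives $\bold{\nabla}\t\bold{u}\in\bold{H}^1(\Om)$, the field $\bold{\nu}\t(\bold{\nabla}\t\bold{u})$ is a genuine tangential field on $\p\Om$, and the pairing reduces to an $L^2(\p\Om)$ integral against the tangential trace of $\bold{v}$.

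Finally, to upgrade ``$=0$ for all $\bold{v}\in\bold{X}_T$'' to the pointwise vanishing $\bold{\nu}\t(\bold{\nabla}\t\bold{u})=\bold{0}$, I would, on each regular piece $\Gamma$ of $\p\Om$, invoke the trace theorem to produce, for an arbitrary smooth tangential field $\bold{g}$ on $\p\Om$, a lifting $\bold{v}\in\bold{H}^1(\Om)$ with $\bold{v}|_{\p\Om}=\bold{g}$; since $\bold{g}\cdot\bold{\nu}=0$ this $\bold{v}$ lies in $\bold{X}_T$. Testing with such $\bold{v}$ shows that the tangential field $\bold{\nu}\t(\bold{\nabla}\t\bold{u})$ is $L^2$-orthogonal to a dense family of tangential traces, whence it vanishes on $\p\Om$, giving the second condition in \eqref{3rd-bc_aux}.

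The main obstacle I anticipate is the careful handling of the boundary duality: verifying the precise sign and form of the curl boundary term, noting that $\bold{\nu}\t(\bold{\nabla}\t\bold{u})$ is automatically tangential so that only tangential test traces are needed, and confirming that the admissible traces of $\bold{X}_T$ fields are rich enough (dense in the tangential $L^2$ fields) to force the pointwise vanishing. By contrast, the interior recovery of \eqref{lame-sys} and the first boundary condition are routine.
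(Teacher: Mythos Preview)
Your proposal is correct and follows essentially the same route as the paper: recover \eqref{lame-sys} by testing with $\bold{C}^\infty_0(\Om)$, read off $\bold{\nu}\cdot\bold{u}=0$ from $\bold{u}\in\bold{X}_T$, then use the integration-by-parts identity to isolate $\langle\mu\bold{\nu}\t(\bold{\nabla}\t\bold{u}),\bold{v}\rangle_{\p\Om}=0$ and kill it by lifting arbitrary smooth tangential data on a regular piece $\Gamma$ to a test function in $\bold{X}_T$ via the trace theorem. The only cosmetic difference is that the paper prescribes the lifting through $\bold{\nu}\t\bold{v}=\bold{\phi}$ on $\Gamma$ (and zero elsewhere) for $\bold{\phi}\in(C^\infty_c(\Gamma))^2$, whereas you phrase it as $\bold{v}|_{\p\Om}=\bold{g}$ for a tangential $\bold{g}$; these are equivalent formulations of the same density argument.
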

\begin{proof}
    It is straightforward to get \eqref{lame-sys} from \eqref{vp_3rd-bc} by integration by parts. Since $\bold{u}\in\bold{X}_T$, it has a vanishing normal trace on $\p\Om$, i.e., the first boundary condition in \eqref{3rd-bc_aux}. It remains to prove the second boundary condition in \eqref{3rd-bc_aux}. On any regular piece $\Gamma$ of $\p\Om$, for any tangential vector field $\bold{\phi}\in (C^{\infty}_c(\Gamma))^2$, the trace theorem for Sobolev spaces implies that there exists a $\bold{v}\in \bold{H}^1(\Om)$ such that
    \[
        \bold{\nu}\t\bold{v}=\left\{\begin{array}{ll}
            \bold{\phi}\quad\mbox{on}~\Gamma,\\
            0\quad\mbox{on the rest of}~\p\Om,
                \end{array}
        \right.
        \quad
            \bold{v}\cdot\bold{\nu}=\bold{0}\quad\mbox{on}~\p\Om,
        \]
    Such a $\bold{v}$ is clearly in $\bold{X}_T$. We insert it in \eqref{vp_3rd-bc}, perform integration by parts and use \eqref{lame-sys} to get for any tangential vector field $\bold{\phi}\in (C_c^\infty(\Gamma))^2$
    \[
        -(\mu\bold{\nu}\t(\bold{\nabla}\t\bold{u}),\bold{v})_{\p\Om}=(\mu\bold{\nu}\t(\bold{\nabla}\t\bold{u}),\bold{\nu}\t\bold{\phi})_{\Gamma}=0.
    \]
    The second boundary condition follows as a result of the arbitrariness of $\Gamma$.
\end{proof}

%Now we define an operator $\bold{K}_T:\bold{L}^2(\Om)\to\bold{X}_T$ by
%\[
%    (\mu \bold{\nabla} \wedge \bold{K}_T\bold{w}, \bold{\nabla} \wedge \bold{v} )+( (\lambda+2\mu) \bold{\nabla}\cdot \bold{K}_T\bold{w},\bold{\nabla} \cdot \bold{v}) + (\bold{K}_T\bold{w},\bold{v})= (\bold{w},\bold{v})\quad \forall~\bold{v} \in \bold{X}_T.
%\]
%and rewrite \eqref{vp_3rd-bc} as
%\[
%    \bold{u}-(\om^2+1)\bold{K}_T\bold{u}=\bold{K}_T\bold{f}.
%\]
%The compact embedding of $\bold{X}_T$ into $\bold{L}^2(\Om)$ (cf. \cite{AmroucheBernardiDaugeEtAl1998}) and the Fredholm alternative theory ensure the well-posedness of the problem \eqref{vp_3rd-bc}.

\begin{rem}\label{rem:vp-3rd-bc2}
    As in the case of the fourth boundary condition, if $\Om$ is further assumed to be a Lipschitz polyhedron, we may argue as in the proof of \cite[Corollary 2.2]{liuxiao} to deduce the second condition in \eqref{3rd-bc} from \eqref{3rd-bc_aux}. Therefore, the variational problem \eqref{vp_3rd-bc} and the Lam\'{e} system \eqref{lame-sys} with \eqref{3rd-bc} are equivalent when $\Om$ is a Lipschitz polyhedron.
\end{rem}

Analogous to Theorem~\ref{thm:4thdecoup}, we have the following decoupling result for the third kind boundary condition.
\begin{thm}\label{thm:3rddecoup}
	%Let $\Omega$ be a bounded Lipschitz polyhedral domain, and let
Suppose the solution $\bold{u}=\bold{u}([\Om,\textrm{III}])$ to \eqref{vp_3rd-bc} is in $\bold{H}^3(\Om)$ and $\bold{f}\in\bold{H}^1(\Om)$.
	Then
	\begin{equation}\label{hel_3rd-bc}
	\left\{\begin{array}{ll}
	\Delta v_p+k_p^2v_p=\frac{1}{\lam+2\mu}\bold{\nabla}\cdot\bold{f}\quad\m{in}~\Om,\\
	\p_{\bold{\nu}} v_p=\frac{1}{\lam+2\mu}\bold{\nu}\cdot\bold{f}\quad\m{on}~\p\Om,
	\end{array}
	\right.
	\end{equation}
	and
	\begin{equation}\label{max_3rd-bc}
	\left\{\begin{array}{ll}
	\bold{\nabla}\t(\bold{\nabla\t\bold{E}_s})-k_s^2\bold{E}_s=\frac{1}{\mu}\bold{\nabla}\t\bold{f}\quad\m{in}~\Om,\\
\bold{\nabla}\cdot\bold{E}_s=0\quad\m{in}~\Om,\\
	\bold{\nu}\t\bold{E}_s=\bold{0}\quad\m{on}~\p\Om,
	\end{array}\right.
	\end{equation}
	 with $v_p=v_p([\Om,\textrm{III}])=:-\bold{\nabla}\cdot\bold{u}$ and $\bold{E}_s=\bold{E}_s([\Om,\textrm{III}]):=\bold{\nabla}\t\bold{u}$.
\end{thm}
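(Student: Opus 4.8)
The plan is to mirror the proof of Theorem~\ref{thm:4thdecoup}, using as the starting point Theorem~\ref{thm:3rdweakequ}, which already tells us that the $\bold{H}^3$-solution $\bold{u}$ of \eqref{vp_3rd-bc} solves the strong Lam\'{e} system \eqref{lame-sys} together with the two boundary conditions $\bold{\nu}\cdot\bold{u}=0$ and $\bold{\nu}\t(\bold{\nabla}\t\bold{u})=\bold{0}$ on $\p\Om$; see \eqref{3rd-bc_aux}. The interior equations will be read off by testing \eqref{vp_3rd-bc} against $\bold{\nabla}\phi$ and $\bold{\nabla}\t\bold{F}$ for compactly supported $\phi$ and $\bold{F}$, exactly as in Theorem~\ref{thm:4thdecoup}, while the boundary conditions will come from \eqref{3rd-bc_aux} and from taking components of \eqref{lame-sys} on $\p\Om$. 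Throughout, $\om^2=(\lam+2\mu)k_p^2=\mu k_s^2$.

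For the Helmholtz part I would first take $\bold{v}=\bold{\nabla}\phi$ with $\phi\in C_0^\infty(\Om)$. Since $\bold{\nabla}\t\bold{\nabla}\phi=\bold{0}$ the curl term in $a_3$ disappears, and integrating by parts the divergence term precisely as in the proof of Theorem~\ref{thm:4thdecoup} gives $\Delta v_p+k_p^2 v_p=\frac{1}{\lam+2\mu}\bold{\nabla}\cdot\bold{f}$ in $\Om$ with $v_p=-\bold{\nabla}\cdot\bold{u}$; no boundary information is needed here. For the Neumann condition I would use the strong equation: writing \eqref{lame-sys} via \eqref{atwo} as $\mu\bold{\nabla}\t(\bold{\nabla}\t\bold{u})-(\lam+2\mu)\bold{\nabla}(\bold{\nabla}\cdot\bold{u})-\om^2\bold{u}=\bold{f}$ and taking the normal component $\bold{\nu}\cdot(\,\cdot\,)$ on $\p\Om$, the term $\om^2\bold{\nu}\cdot\bold{u}$ vanishes by \eqref{3rd-bc_aux}, the middle term equals $(\lam+2\mu)\p_{\bold{\nu}}(\bold{\nabla}\cdot\bold{u})=-(\lam+2\mu)\p_{\bold{\nu}}v_p$, and—granting that the normal trace of the double curl vanishes—rearranging yields $\p_{\bold{\nu}}v_p=\frac{1}{\lam+2\mu}\bold{\nu}\cdot\bold{f}$ on $\p\Om$.

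The Maxwell part is the dual argument and is lighter on the boundary. Taking $\bold{v}=\bold{\nabla}\t\bold{F}$ with $\bold{F}\in\bold{C}_0^\infty(\Om)$ annihilates the divergence term (because $\bold{\nabla}\cdot\bold{\nabla}\t\bold{F}=0$), and integrations by parts produce $\bold{\nabla}\t(\bold{\nabla}\t\bold{E}_s)-k_s^2\bold{E}_s=\frac{1}{\mu}\bold{\nabla}\t\bold{f}$ in $\Om$ with $\bold{E}_s=\bold{\nabla}\t\bold{u}$; the constraint $\bold{\nabla}\cdot\bold{E}_s=0$ holds automatically as the divergence of a curl. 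The boundary condition $\bold{\nu}\t\bold{E}_s=\bold{\nu}\t(\bold{\nabla}\t\bold{u})=\bold{0}$ is then nothing but the second identity in \eqref{3rd-bc_aux}, so no further work is required here. This is the reverse of the situation in Theorem~\ref{thm:4thdecoup}, where the Maxwell boundary condition was the delicate one.

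The hard part will be justifying $\bold{\nu}\cdot\bold{\nabla}\t(\bold{\nabla}\t\bold{u})=0$ on $\p\Om$, which drives the Helmholtz Neumann condition. I would settle it with the surface identity $\bold{\nu}\cdot(\bold{\nabla}\t\bold{w})=\mathrm{curl}_\Gamma\,\bold{w}_T$, valid on each regular piece $\Gamma$ of $\p\Om$ (cf. \cite{ColtonKress2012,Nedelec2001}), where $\bold{w}_T$ is the tangential trace of $\bold{w}$: applying it to $\bold{w}=\bold{\nabla}\t\bold{u}$ and noting that $\bold{\nu}\t\bold{w}=\bold{0}$ forces $\bold{w}_T=\bold{0}$, so its surface curl and hence the normal trace of the double curl vanish. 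An alternative that sidesteps the surface calculus is to test \eqref{vp_3rd-bc} directly with $\bold{v}=\bold{\nabla}\phi$ for $\phi\in H^1(\Om)$ satisfying $\p_{\bold{\nu}}\phi=0$ (so that $\bold{\nabla}\phi\in\bold{X}_T$), integrate by parts twice using Green's identity, and extract the Neumann condition from the density of such traces in $H^{1/2}(\p\Om)$. In either route the only regularity checkpoint is that $\bold{u}\in\bold{H}^3(\Om)$ and $\bold{f}\in\bold{H}^1(\Om)$ render all traces and pointwise manipulations legitimate, which is exactly what is assumed.
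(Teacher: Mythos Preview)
Your proposal is correct and follows essentially the same architecture as the paper: interior equations via testing \eqref{vp_3rd-bc} with $\bold{\nabla}\phi$ and $\bold{\nabla}\t\bold{F}$ for compactly supported $\phi,\bold{F}$; the Maxwell boundary condition read off directly from \eqref{3rd-bc_aux}; and the Helmholtz Neumann condition extracted from the strong Lam\'{e} equation using both relations in \eqref{3rd-bc_aux}.

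The one genuine difference lies in how you obtain $\p_{\bold{\nu}}v_p=\frac{1}{\lam+2\mu}\bold{\nu}\cdot\bold{f}$. The paper multiplies \eqref{lame-sys} by $\bold{\nabla}\phi$ for arbitrary $\phi\in H^1(\Om)$ and integrates by parts; the term $(\mu\bold{\nabla}\t\bold{\nabla}\t\bold{u},\bold{\nabla}\phi)$ vanishes because one integration by parts produces the boundary pairing $\langle\bold{\nu}\t(\bold{\nabla}\t\bold{u}),\bold{\nabla}\phi\rangle_{\p\Om}=0$ by \eqref{3rd-bc_aux} and the bulk term $(\bold{\nabla}\t\bold{u},\bold{\nabla}\t\bold{\nabla}\phi)=0$. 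The remaining boundary terms then yield the Neumann condition directly, without ever invoking the surface-curl identity. Your primary route instead takes the normal component of \eqref{lame-sys} pointwise and appeals to $\bold{\nu}\cdot(\bold{\nabla}\t\bold{w})=\mathrm{curl}_\Gamma\bold{w}_T$ with $\bold{w}=\bold{\nabla}\t\bold{u}$. Both arguments ultimately rest on the same fact, $\bold{\nu}\t(\bold{\nabla}\t\bold{u})=\bold{0}$, but the paper's integral route stays within standard integration by parts and avoids surface differential calculus on the piecewise-smooth boundary, while your pointwise route is more direct once the surface identity is granted. Your ``alternative'' is closer in spirit to the paper's argument, though note you would need $\phi\in H^2(\Om)$ (not merely $H^1$) so that $\Delta\phi\in L^2(\Om)$ and $\bold{\nabla}\phi\in\bold{X}_T$.
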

\begin{proof}
	The proof is similar to that of Theorem \ref{thm:4thdecoup}. We start with the Maxwell system \eqref{max_3rd-bc}. Taking $\bold{v}=\bold{\nabla}\t\bold{F}$ with any $\bold{F}\in \bold{C}_0^\infty(\Om)$ in \eqref{vp_3rd-bc} and integration by parts give
\[
    (\mu\bold{\nabla}\t\bold{\nabla}\t(\bold{\nabla}\t\bold{u}),\bold{F})-\omega^2(\bold{\nabla}\t\bold{u},\bold{F})=(\bold{\nabla}\t\bold{f},\bold{F}).
\]
From this and the second equation in \eqref{3rd-bc_aux}, we get \eqref{max_3rd-bc} with $\bold{E}_s=\bold{\nabla}\t\bold{u}$. Likewise, we set $v=\bold{\nabla}\phi$ in \eqref{vp_3rd-bc} for any $\phi\in C_0^\infty(\Om)$. Then integration by parts yields the first equation in \eqref{hel_3rd-bc} with $v_p=-\bold{\nabla}\cdot\bold{u}$. Next we show the second boundary condition in \eqref{hel_3rd-bc}. In view of Theorem \ref{thm:3rdweakequ}, $\bold{u}$ also solves the Lam\'{e} system \eqref{lame-sys}. For any $\phi\in H^1(\Om)$, we multiply both sides of \eqref{lame-sys} by $\bold{\nabla}\phi$ and perform integration by parts with the help of \eqref{atwo} to get
\begin{align*}
    &\quad   (-\Delta^\ast\bold{u}-\omega^2\bold{u},\bold{\nabla}\phi)\quad\forall~\phi\in H^1(\Om)\\
    &=(\mu\bold{\nabla}\t\bold{\nabla}\t\bold{u},\bold{\nabla}\phi)-((\lam+2\mu)\bold{\nabla}(\bold{\nabla}\cdot\bold{u}),\bold{\nabla}\phi)-\om^2(\bold{u},\bold{\nabla}\phi)\\
    &\underbrace{=}_{\eqref{3rd-bc_aux}}((\lam+2\mu)\Delta\bold{\nabla}\cdot\bold{u},\phi)-((\lambda+2\mu)\bold{\nabla}(\bold{\nabla}\cdot\bold{u})\cdot\bold{\nu},\phi)_{\p\Om}+\om^2(\bold{\nabla}\cdot\bold{u},\phi),
    %&=(\mu\bold{\nabla}\t\bold{u},\bold{\nabla}\t\bold{v})+((\lam+2\mu)\bold{\nabla}\cdot\bold{u},\bold{\nabla}\cdot\bold{v})-\om^2(\bold{u},\bold{v}).
\end{align*}
    \[
        (\bold{f},\bold{\nabla}\phi)=-(\bold{\nabla}\cdot\bold{f},\phi)+(\bold{\nu}\cdot\bold{f},\phi)_{\p\Om}\quad\forall~\phi\in H^1(\Om).
    \]
    Finally the desired result is concluded from \eqref{lame-sys} and the first equation in \eqref{hel_3rd-bc}.
\end{proof}

\begin{rem}
    In the above proof, we in fact get another boundary condition
    \[
        \bold{\nabla}\cdot\bold{\nabla}(\bold{\nabla}\cdot\bold{u})=-\frac{1}{\lam+2\mu}\bold{\nabla}\cdot\bold{f}\quad\mbox{on}~\p\Om.
    \]
    In \cite{liuxiao}, this and the second condition in \eqref{3rd-bc_aux} have been obtained for the homogeneous Lam\'{e} system coupled with \eqref{3rd-bc} under the assumption that $\Om$ is a Lipschitz polyhedron, which then ensures \eqref{hel_3rd-bc} and \eqref{max_3rd-bc}. Like the case of the fourth boundary condition, the reason we retrieve these results here is that \eqref{vp_3rd-bc} is posed over a Lipschitz polyhedron $\Om$.
\end{rem}

\section{Concluding Remark}

In this paper, two weak formulations are proposed for the Lam\'{e} system with the third and the fourth boundary condition, respectively. Under some geometric conditions on the boundary surface and a reason regularity assumption on the solution, we prove the equivalence of these two formulations. Moreover, the decoupling results in \cite{liuxiao} are concluded from the weak formulations if $\mathcal{S}(\bold{x})=0$ on $\Om$ or $\Om$ is a Lipschitz polyhedron. One may naturally raise a question whether above results are true when the geometric assumptions do not hold. This causes so-called imperfect decoupling of the Lam\'{e} system. We shall explore this problem in a forthcoming paper.

\section*{Acknowledgement}

The work of J Lai was supported by Natural Science Foundation of Fujian Province of China (2016J01670).
The work of H Liu was supported by the startup fund and FRG grants from Hong Kong Baptist University and the Hong Kong RGC grants (projects 12302415 and 12302017). The work of Y Xu was supported by National Natural Science Foundation of China (11201307), Ministry of Education of China through Special Research Fund for the Doctoral Program of Higher Education (20123127120001) and Natural Science Foundation of Shanghai (17ZR1420800).


\begin{thebibliography}{99}

\bibitem{AmroucheBernardiDaugeEtAl1998}
C.~Amrouche, C.~Bernardi, M.~Dauge, and V.~Girault.
\newblock Vector potentials in three-dimensional non-smooth domains.
\newblock {\em Math. Meth. Appl. Sci.}, 21:823--864, 1998.

\bibitem{BrennerScott2008}
S.~C. Brenner and L.~R. Scott.
\newblock {\em The Mathematical Theory of Finite Element Methods}.
\newblock Springer-Verlag, Berlin-New York, third edition, 2008.

\bibitem{Ciarlet1978}
P.~G. Ciarlet.
\newblock {\em The Finite Element Method for Elliptic Problem}.
\newblock North-Holland, Amsterdam, 1978.

\bibitem{ColtonKress2012}
D.~Colton and R.~Kress.
\newblock {\em Inverse Acoustic and Electromagnetic Scattering Theory}.
\newblock Springer-Verlag, Berlin, 2012.

%\bibitem{HairerNorsettWanner1993}
%E.~Hairer, S.~P. N{\o}rsett, and G.~Wanner.
%\newblock {\em Solving Ordinary Differential Equation I: Nonstiff Problems}.
%\newblock Springer, Berlin, 1993.

\bibitem{Jin2014}
J.~M. Jin.
\newblock {\em The Finite Element Method in Electromagnetics}.
\newblock Wiley-IEEE Press, New Jersey, third edition, 2014.

\bibitem{Knops1971}
R.~J. Knops and L.~E. Payne, \newblock{\em Uniqueness Theorems in Linear Elasticity}.
\newblock Springer, Berlin-Heidelberg, 1971.

\bibitem{Kupradz1979}
V.~D. Kupradze, \newblock{\em Three-Dimensional Problems of Elasticity and Thermoelasticity}.
\newblock North-Holland Publishing Company, 1979.

%\bibitem{LarssonThomee2003}
%S.~Larsson and V.~Thom\'{e}e.
%\newblock {\em Partial Differential Equations with Numerical Methods}.
%\newblock Springer-Verlag, Berlin, 2003.

\bibitem{liuxiao}
H.~Liu and J.~Xiao.
\newblock Decoupling elastic waves and its applications.
\newblock {\em J. Differential Equations}, 263:4442-4480, 2017

\bibitem{Monk2003}
P.~Monk.
\newblock {\em Finite Element Methods for Maxwell's Equations}.
\newblock Oxford University Press, Oxford, 2003.

\bibitem{Nedelec2001}
J.~C. N\'ed\'elec.
\newblock {\em Acoustic and Electromagnetic Equations}.
\newblock Springer, New York, 2001.

\bibitem{Weber1980}
Ch. Weber.
\newblock A local compactness theorem for Maxwell's equations.
\newblock {\em Math. Meth. Appl. Sci.}, 1:12--25, 1980.

\end{thebibliography}
\end{document}